\pdfoutput=1
\documentclass[12pt]{article}
\usepackage{amsthm}
\usepackage{amsmath}
\usepackage{amssymb}
\usepackage{amsthm}
\usepackage{amscd}
\usepackage[margin=1in]{geometry}
\usepackage{comment}
\usepackage{pdfpages}
\usepackage{tikz-cd}
\usepackage{breqn}
\usepackage{kbordermatrix}
\usepackage{mathtools}
\usepackage{enumerate}
\usepackage{graphicx}
\usepackage{hyperref}



\newtheorem{thm}{Theorem}[section]
\newtheorem{lem}[thm]{Lemma}

\newtheorem{conjecture}[thm]{Conjecture}
\newtheorem{thm*}{Theorem}

\theoremstyle{definition}

\newtheorem{example}[thm]{Example}

\numberwithin{equation}{section}

\newcommand{\C}{\mathbb{C}}

\newcommand{\FF}{\mathbb{F}}

\newcommand{\mf}{\mathfrak}

\def\ker{\mbox{\rm ker}}

\newcommand{\mb}{\mathbb}

\DeclareMathOperator{\rank}{rank}
\DeclareMathOperator{\Gr}{Gr}
\DeclareMathOperator{\ad}{ad}
\DeclareMathOperator{\Sym}{Sym}
\newcommand{\Fgen}{\mb{F}^{\mathrm{gen}}}
\newcommand{\Rgen}{\widehat{R}_\mathrm{gen}}
\newcommand{\Plucker}{\mathrm{Pl\ddot{u}cker}}

\def\ker{\mbox{\rm ker}}
\def\deg{\mbox{\rm deg}}

\begin{document}

\title{Structure theorems for\\
Gorenstein ideals of codimension four\\
with small number of generators}

\author{Tymoteusz Chmiel \thanks{ Jagiellonian University, Instytut Matematyki, Krak\'{o}w \textbf{Email address:} tymoteusz.chmiel@uj.edu.pl 
} 
\and Lorenzo Guerrieri \thanks{ Jagiellonian University, Instytut Matematyki, Krak\'{o}w \textbf{Email address:} lorenzo.guerrieri@uj.edu.pl 
} 
\and Xianglong Ni
\thanks{University of Notre Dame, Department of Mathematics \textbf{Email address:} xni3@nd.edu } 
\and Jerzy Weyman
\thanks{Jagiellonian University, Instytut Matematyki, Krak\'{o}w \textbf{Email address:} jerzy.weyman@uj.edu.pl }} 

\maketitle

\begin{abstract}
\noindent
In this article we study minimal free resolutions of Gorenstein ideals of codimension four, using methods coming from representation theory. We introduce families of higher structure maps associated with such resolution, defined similarly to the codimension three case. As our main application, we prove that every Gorenstein ideal of codimension four minimally generated by six elements is a hyperplane section of a Gorenstein ideal of codimension three, strengthening a result by Herzog-Miller and Vasconcelos-Villarreal. We state analogous conjectural results for ideals minimally generated by seven and eight elements. \\
\noindent MSC:  13C05, 13D02, 13H10 \\
\noindent Keywords: Gorenstein ideals of codimension four, free resolutions, Lie algebras.
\end{abstract}

\section*{Introduction}

Gorenstein ideals have applications in singularity theory and linkage theory, and occur in the study of canonical curves, $K3$ surfaces and more general Calabi-Yau varieties. The structure theory of Gorenstein ideals of codimension $4$ has been pursued since the paper \cite{Buchsbaum-Eisenbud_codim-3} of Buchsbaum and Eisenbud on the codimension $3$ case.

Kunz showed in \cite{kunz} that an almost complete intersection cannot be Gorenstein, therefore the smallest possible case to consider was the case of Gorenstein ideals of codimension 4 and deviation $2$ (the deviation of an ideal is classically defined as the difference of minimal number of generators and codimension).
Herzog and Miller \cite{herzog-miller} proved that under certain conditions the only such ideals were hypersurface sections of Gorenstein ideals of codimension $3$ and deviation $2$.
Some of these assumptions were removed by Vasconcelos and Villarreal \cite{vv}. They showed that the result is true for ideals that are generically complete intersections. Still, the general case seemed elusive. Notably, Artinian examples are not covered by the Vasconcelos-Villarreal theorem.

Let $R$ be a local ring and let $I$ be a Gorenstein ideal of codimension $4$. Let 
$$\FF_\bullet :0\rightarrow R{\buildrel{d_4}\over\longrightarrow}F_3{\buildrel{d_3}\over\longrightarrow}F_2=F_2^*{\buildrel{d_3^*}\over\longrightarrow}F_3^*{\buildrel{d_4^*}\over\longrightarrow}R$$ be the minimal free resolution
 of $R/I$. 
 It was clear from the outset that the middle module $F_2$ has a nondegenerate symmetric bilinear form, thus the corresponding spinor group should play a role in the structure theory. 

In \cite{DGA-algebra}, \cite{KM1}, \cite{KM2}, Kustin and Miller proved several important structural results. In particular, they showed the existence of an associative, graded multiplicative multiplicative structure satisfying the Leibniz rule.
They also showed that there exists a basis of $F_2$ hyperbolic with respect to the quadratic form coming from this multiplication.

The next step was taken by Miles Reid \cite{reid}, who introduced spinor coordinates for the image of the map $d_3$ and conjectured that they generate the ideal $I$ up to radical. The role of these spinor coordinates was explained and extended in \cite{celikbas-laxmi-weyman} and in \cite{weyman-gorenstein} where this conjecture was proved.

In general, to classify ideals of a given codimension, one can consider an equivalence relation related to the concepts of specializations and deformations. A \it specialization \rm of an ideal $J$ is the image of $J$ in a quotient ring by a regular sequence $\underline \alpha$, which is still regular modulo $J$. If $I$ is a specialization of $J$, then $J$ is a \it deformation \rm of $I$. 
Two ideals that have a common deformation are in the same \it Herzog class \rm (see \cite{Herzog}, \cite{GNW3}). 

There was a lot of progress in recent years. A direct link was found in \cite{weyman-gorenstein} between the structure of Gorenstein ideals of codimenson $4$ with $n$ generators and a root system of type $E_n$. This suggests that the situation is very different for $n\le 8$ and for $n\ge 9$. One expects that for $n\le 8$ every Gorenstein ideal is in the linkage class of a complete intersection (\emph{licci}) and that they belong to only finitely many Herzog classes. For $n\ge 9$ one clearly has non-licci ideals, notably ``Tom and Jerry'' cases: the defining ideal of the embedding of ${\bf P}^1\times {\bf P}^1\times{\bf P}^1$ into ${\bf P}^7$ and the defining ideal of the embedding of ${\bf P}^2\times {\bf P}^2$ into ${\bf P}^8$. One also expects the existence of infinitely many Herzog classes of licci ideals for each $n \geq 9$.

The goal of the present paper is to give a gentle introduction to this circle of ideas. We concentrate mainly on the smallest case $n=6$. We explicitly construct in this case the so-called \emph{higher structure maps} introduced in \cite{weyman-gorenstein}. Such higher structure maps are already well-studied in the codimension three case \cite{W18}, \cite{Gue-Wey}, \cite{GNW1}, \cite{GNW3}. For a Gorenstein ideal of codimension four with $n \geq 6$ generators, these structure maps are related to the grading of the Kac-Moody Lie algebra $E_n$ induced by a particular simple root $\alpha_1$.

Studying the case $n=6$, we apply the theory of higher structure maps to prove the Vasconcelos-Villarreal theorem over complete regular local rings, without assuming the Gorenstein ideal is generically a complete intersection. In particular our result covers the Artinian case. The idea is to construct another family of higher structure maps related to a different grading of $E_6$, and then to mimic the method used by the second, third and fourth author for perfect ideals of codimension $3$ in \cite{Guerrieri-Ni-Weyman}.

The paper is organized as follows. In \S\ref{section:pre}, we recall basic material on Gorenstein ideals and their spinor structure, Lie algebras associated to the graph $E_n$, generic rings, and Schubert varieties. We then give a conceptual overview of the main argument using this language. Strictly speaking, a lot of this background is not mandatory for reading the later sections, but the computations therein would be much more opaque without this guiding motivation.

In \S\ref{section:hsm1} we introduce the higher structure maps related to the $\alpha_1$-grading. We give formulas for the first few graded components. For the case of $E_6$, we give a complete description of two critical representations. We also give the formulas for a split exact complex, with generic choice of defect variables. In \S\ref{section:hsm2} we present the higher structure maps related to the simple root $\alpha_2$. Again, we calculate them for a split exact complex and give formulas for few graded components.

In \S\ref{section:structure_theorems} we explain how to use both families of higher structure maps to prove the following structure theorem (see Theorem \ref{th:generic-gorenstein}) for Gorenstein ideals of codimension four with $6$ generators:
\begin{thm*}
Let $R$ be a complete regular local ring in which two is a unit or a graded polynomial ring over quadratically closed field of characteristic $\neq 2$. Let $I\subset R$ be a Gorenstein ideal of codimension four, minimally generated by six elements.

Then $I=(J,y)$, where $J$ is the ideal generated by submaximal pfaffians of some skew-symmetric $5\times 5$ matrix and $[y]$ is a regular element of $R/J$.    
\end{thm*}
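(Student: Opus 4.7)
Following the strategy outlined in the introduction, I would combine the two families of higher structure maps from Sections \ref{section:hsm1} and \ref{section:hsm2}, adapting to codimension four the argument used in \cite{Guerrieri-Ni-Weyman} for perfect ideals of codimension three. The starting data is the minimal free resolution
\[
\mathbb{F}_\bullet:\; 0\to R\xrightarrow{d_4} F_3\xrightarrow{d_3} F_2=F_2^*\xrightarrow{d_3^*} F_3^*\xrightarrow{d_4^*} R,
\]
with $F_3$ of rank $6$ and (by the Euler characteristic) $F_2$ of rank $10$. By the work of Kustin-Miller recalled above, $F_2$ carries a nondegenerate symmetric bilinear form with a hyperbolic basis, so the structural group is $\mathrm{Spin}(10)$; this matches the Levi obtained by deleting the node $\alpha_1$ from the $E_6$ Dynkin diagram.

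The first step is to apply the $\alpha_1$-graded higher structure maps from \S\ref{section:hsm1}. These realize the equivariant lift of $\mathbb{F}_\bullet$ to the generic ring $\Rgen$, encode Reid's spinor coordinates, and give a decomposition of the relevant half-spin representations in terms of which $I$ can be described using pfaffian-like data. The explicit calculation of the first graded components on a split exact complex carried out in \S\ref{section:hsm1} is what makes the $\mathrm{Spin}(10)$-structure accessible at the level of concrete formulas.

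The second step is to bring in the $\alpha_2$-graded higher structure maps from \S\ref{section:hsm2}, whose purpose is precisely to single out a direction in $F_3$ not visible to the $\alpha_1$-grading. Together with the split-complex formulas derived there, they should produce, after specialization from $\Rgen$, a distinguished basis vector of $F_3$ whose image $y\in I$ under $d_4^*$ is the candidate hyperplane section. Once $y$ is in hand, the third step is to verify that the quotient $I/(y)\subset R/(y)$ is a codimension three Gorenstein ideal minimally generated by five elements. At that point the Buchsbaum-Eisenbud structure theorem \cite{Buchsbaum-Eisenbud_codim-3} forces $I/(y)$ to be generated by the submaximal pfaffians of some $5\times 5$ skew-symmetric matrix, and lifting to $R$ produces $J\subset R$ with $I=(J,y)$; minimality of the original six generators then forces $[y]$ to be regular modulo $J$.

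The principal obstacle is the descent from the generic ring to $R$ itself: the formulas coming from the higher structure maps live most naturally over $\Rgen$, and one must show that the distinguished generator $y$ can actually be chosen inside the concrete ideal $I\subset R$. This is where the hypotheses on $R$ (complete regular local with $2$ invertible, or graded polynomial over a quadratically closed field) intervene, via the splitting and lifting results recalled in \S\ref{section:pre}. A secondary subtlety, relative to \cite{vv}, is that $R/I$ is not assumed to be generically a complete intersection; the representation-theoretic construction sidesteps that hypothesis, which is precisely why the argument will cover the Artinian case.
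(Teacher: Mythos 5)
Your proposal correctly identifies the raw ingredients — both families of higher structure maps, the spin/$E_6$ representation theory, the eventual appeal to Buchsbaum--Eisenbud — but it misses the mechanism the paper actually uses, and the substitute you sketch does not close the gap.

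The crux of your plan is to ``single out a direction in $F_3$ ... whose image $y \in I$ ... is the candidate hyperplane section,'' and then ``verify that $I/(y)\subset R/(y)$ is a codimension three Gorenstein ideal minimally generated by five elements.'' That existence of a distinguished minimal generator $y$ with this property is precisely what the theorem asserts, and nothing in your sketch explains why such a $y$ should exist or how the $\alpha_2$-graded structure maps would select it. The $\alpha_2$-graded maps in the paper do not pick out a direction in $F_3$; they assemble into a single map $w^{(1)}$ landing in $V(\omega_1,E_6)\otimes R$, whose bottom $\alpha_2$-graded component is the full first differential $d_1$, not one coordinate of it. The $\alpha_1$-graded family enters only to prove surjectivity of $w^{(1)}$, via Lemma \ref{lem:square-matrix}: the two families together form the rows and columns of an invertible matrix $M\in V\otimes V$, and invertibility over $R$ (not just $R_h$) is obtained by an algebraic Hartogs argument using $\operatorname{grade} I = 4 \geq 2$. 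Your proposal never addresses why $w^{(1)}$ is non-zero modulo the maximal ideal, which is the step that requires both gradings simultaneously and is the technical heart of the argument.

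The step that actually forces the pfaffian hyperplane-section structure is the Schubert geometry of \S\ref{sec:bg-schubert}, which your proposal omits entirely. Once $w^{(1)}$ is shown to satisfy the Pl\"ucker relations and to be surjective, it defines $f\colon\operatorname{Spec}R\to G/P_1^+$ with $f^{-1}X^w = \operatorname{Spec}R/I$ for $w=s_2s_4s_3s_1$ (Example \ref{ex:Schubert-X^w}). The decomposition of $X^w$ into $P_{z_1}^-$-orbits, indexed by $W_2\backslash W/W_1$, then shows $R/I$ is a specialization of one of finitely many Kazhdan--Lusztig local rings $\mathcal{O}_{\mathcal{N}_\sigma^w,\sigma v}$; for $E_6$ there are exactly three double cosets, corresponding to the unit ideal, an almost complete intersection, and the hyperplane section of pfaffians (citing \cite{ftw}). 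The hypotheses on $I$ rule out the first two, giving the conclusion. Without this classification step, your reduction to Buchsbaum--Eisenbud has nothing to feed into it: you would need to independently establish both the existence of $y$ and the codimension three Gorenstein property of $I/(y)$, and you offer no route to either.
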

\noindent Finally, in \S\ref{section:conjecture-end} we sketch similar results and conjectures for Gorenstein ideals of codimension four with $7$ and $8$ generators.

\section{Preliminaries}\label{section:pre}

In this section we give preliminaries on topics relevant to the rest of the paper. We start with the structure theory of Gorenstein ideals. Next we discuss Lie algebras and their representations. Then we briefly introduce a generic ring for the truncated resolutions of Gorenstein ideals of codimension four. Finally, we include some background on Schubert varieties in homogeneous spaces.

\subsection{Structure theory of Gorenstein ideals of codimension four}\label{section:pre1}

Let $R$ be a commutative Noetherian ring. We generally assume $R$ to be local or graded. We also assume $\frac{1}{2} \in R$. For a matrix $A$ with entries in $R$ we always denote by $I_d(A)\subset R$ the ideal generated by its $d\times d$ minors.

We will work with free resolutions of codimension four Gorenstein ideals of $ R$. They have the form
\begin{equation}\label{complexF}
\FF: 0 \longrightarrow R \buildrel{d_4}\over\longrightarrow F \buildrel{d_3}\over\longrightarrow  G \buildrel{d_2}\over\longrightarrow F^* \buildrel{d_1}\over\longrightarrow R.
\end{equation}
Here $F \cong R^{n}$ and $G \cong R^{2n-2}$, where $n$ is the minimal numbers of generators of the ideal. Sometimes we write $F^*=F_1$, $G=F_2$, $F=F_3$ and $F_0=F_4 \cong R$. We denote the basis of $F$ by $ \lbrace f_1, \ldots, f_n  \rbrace $. If $x \in F_i$ for $i \in \{0,\cdots,4\}$ we write $\deg(x) =i.$

A differential, graded commutative algebra structure exists on arbitrary minimal free resolutions (see \cite{Buchsbaum-Eisenbud_codim-3}), but it is not always associative. In our setting, the multiplication on $\mb{F}$ can be made strictly associative:

\begin{thm}[Theorem 4.3, \cite{DGA-algebra}]
Let $\mathbb{F}$ be the minimal free resolution of a Gorenstein ideal of codimension 4. 
Then there exists a differential, graded commutative, associative algebra structure on $\mathbb{F}$, i.e. a product map $\mathbb{F}\otimes\mathbb{F}\rightarrow\mathbb{F}$ such that
\begin{itemize}
	\item $F_iF_j\subset F_{i+j}$;
	\item $xy=(-1)^{\deg(x)\deg(y)}xy$;
	\item $d(xy)=d(x)y+(-1)^{\deg(x)}xd(y)$.
\end{itemize}
\end{thm}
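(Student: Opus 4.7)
The plan is to modify a generic graded commutative DG-algebra structure on $\mathbb{F}$---known to exist by Buchsbaum-Eisenbud \cite{Buchsbaum-Eisenbud_codim-3}---into an associative one by successive homotopies. Let $m_0$ denote this initial product. Its associator
$$\alpha(x,y,z) = (xy)z - x(yz)$$
is a degree-zero trilinear map $\mathbb{F}^{\otimes 3}\to\mathbb{F}$ which the Leibniz rule forces to be a chain map with respect to the tensor differential on the source. Thus $\alpha$ represents a cohomology class, and the goal becomes to show this class is trivial, so that a bilinear correction $h:\mathbb{F}\otimes\mathbb{F}\to\mathbb{F}$ of degree $+1$ exists for which the perturbed product $m := m_0 + (dh + hd)$ has vanishing associator.

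Next I would decompose $\alpha$ by tridegree, $\alpha_{ijk}:F_i\otimes F_j\otimes F_k\to F_{i+j+k}$. Because $\mathbb{F}$ has length four and products involving $F_0 = R$ pose no obstruction, only two essentially distinct cases remain: $\alpha_{111}: F_1^{\otimes 3}\to F_3$ and $\alpha_{112}: F_1^{\otimes 2}\otimes F_2\to F_4 = R$. The Gorenstein hypothesis supplies the crucial self-dualities $F_3\cong F_1^*$ and $F_2\cong F_2^*$, which I would use to convert $\alpha_{111}$ into a $4$-tensor on $F_1$. Graded commutativity endows this tensor with strong symmetries, and one can show the obstruction to associativity reduces to a concrete cocycle in a Koszul-type complex on $F_1$.

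The hard step is the $\alpha_{111}$ case: one must produce an explicit bilinear homotopy $h:F_1\otimes F_1\to F_2$ whose boundary cancels $\alpha_{111}$. Here I would exploit the symmetric bilinear form on $F_2$ coming from the pairing $F_2\otimes F_2\to F_4 = R$ to define $h$ as the adjoint of a suitable trilinear form extracted from $m_0$; nondegeneracy of the form then lets the required properties of $h$ follow from the Leibniz rule for $m_0$ itself. Once the middle associator is killed, the case $\alpha_{112}$ follows by duality and a degree count, and graded commutativity of $m$ is preserved by choosing $h$ symmetric. The main obstacle will be tracking signs carefully and verifying that the single correction $h$ trivializes all tridegree obstructions simultaneously without reintroducing non-associativity in lower tridegrees; this is precisely where the codimension four Gorenstein hypothesis contributes decisively, by constraining the target modules tightly enough that each obstruction cohomology class actually admits a global, compatible bounding chain.
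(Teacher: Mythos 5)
This theorem is not proved in the paper under review; it is quoted verbatim from Kustin and Miller, \cite{DGA-algebra}, Theorem 4.3, whose proof is a direct, hands-on construction of the multiplication maps $m_{1,1}$, $m_{1,2}$, $m_{1,3}$, $m_{2,2}$ using the Poincar\'e duality $F_i \cong F_{4-i}^*$ of the resolution, followed by explicit verification of associativity. So there is no ``paper's own proof'' to compare against; I will evaluate your outline on its own terms.

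Your reduction to the two tridegrees $\alpha_{111}\colon F_1^{\otimes 3}\to F_3$ and $\alpha_{112}\colon F_1^{\otimes 2}\otimes F_2\to F_4$ is correct, and your instinct to exploit the perfect pairing $m_{2,2}\colon F_2\otimes F_2\to F_4\cong R$ and the self-duality $F_3\cong F_1^*$ is exactly the right ingredient. However, the homotopy-theoretic scaffolding has a genuine logical gap. You observe that the associator $\alpha$ is a degree-zero chain map $\mathbb{F}^{\otimes 3}\to \mathbb{F}$, hence null-homotopic (because $\alpha$ induces zero on $H_0$, $\mathbb{F}^{\otimes 3}$ is free, and $\mathbb{F}$ is exact in positive degrees). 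But that null-homotopy is a \emph{trilinear} map $H\colon \mathbb{F}^{\otimes 3}\to\mathbb{F}$ of degree $+1$; it does not, by itself, produce a \emph{bilinear} correction $h$ with the property that $m_0 + (dh+hd)$ is associative. If you write $\delta = dh + hd_\otimes$ and expand, the change in the associator is
\[
\alpha_{m_0+\delta} - \alpha_{m_0} \;=\; \bigl[m_0(\delta(\,\cdot\,,\,\cdot\,),\,\cdot\,) - m_0(\,\cdot\,,\delta(\,\cdot\,,\,\cdot\,)) + \delta(m_0(\,\cdot\,,\,\cdot\,),\,\cdot\,) - \delta(\,\cdot\,,m_0(\,\cdot\,,\,\cdot\,))\bigr] + O(\delta^2),
\]
which is the Hochschild coboundary of $\delta$ with respect to $m_0$, not the chain-complex differential $dH+Hd$ of a trilinear map. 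The statement ``the class of $\alpha$ is trivial'' lives in two different cohomology theories in your paragraph, and the implication you need---that null-homotopy of $\alpha$ as a chain map forces the existence of the bilinear corrector---is false in general; it is precisely the strictification problem for $A_\infty$-algebras, whose obstructions do not automatically vanish.

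Concretely, what would need to be shown is that the Hochschild 3-cocycle $\alpha_{m_0}$ (restricted to the two nontrivial tridegrees) is a Hochschild coboundary, and that the cobounding 2-cochain can be chosen graded-symmetric and compatible with Leibniz. Your proposed definition of $h$ as ``the adjoint of a suitable trilinear form extracted from $m_0$'' is not specific enough to see that this works, and the concluding sentence---that the Gorenstein hypothesis ``constrains the target modules tightly enough that each obstruction cohomology class actually admits a global, compatible bounding chain''---is where the actual mathematical content would have to go. Kustin and Miller avoid this entire obstruction-theoretic detour by \emph{constructing} $m_{1,2}$, $m_{1,3}$, and $m_{2,2}$ from $m_{1,1}$ and the duality $F_i\cong F_{4-i}^*$ so that associativity holds by design, rather than perturbing a pre-existing non-associative product. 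If you want to salvage the homotopy approach, you should replace the mapping-cone framing with Hochschild cohomology and identify the relevant obstruction group explicitly, using the degree constraints ($F_j = 0$ for $j > 4$) to show it is zero in the relevant tridegrees.
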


We denote the graded pieces of this multiplicative structure by $m_{i,j}:F_i\otimes F_j\rightarrow F_{i+j}$. The multiplication $m_{2,2}:G\otimes G=F_2\otimes F_2\rightarrow F_4\simeq R$ gives a bilinear form on $G$ and so induces an isomorphism $G\simeq G^*$. Using this isomorphism, the resolution $\mathbb{F}$ can be put in a self-dual form:
$$
\mathbb{F}:0\rightarrow R\xrightarrow{d_1^*} F\xrightarrow{d_2^*} G^*\simeq G\xrightarrow{d_2} F^*\xrightarrow{d_1} R.
$$

In our context, the multiplication $m_{2,2}$ can be normalized.

\begin{thm}[Theorem 4.2, \cite{celikbas-laxmi-weyman}]\label{th:hyprbolic}
Assume that $R$ is a complete regular local ring in which 2 is invertible or a polynomial ring over an quadratically closed field $\mathbb{K}$ such that $\operatorname{char}\mathbb{K}\neq 2$. Then there exists a basis of the free module $G\simeq R^{2(n-1)}$ such that the multiplication $m_{2,2}:G\otimes G\rightarrow R$ is in hyperbolic form, i.e. its matrix is
$$
\begin{pmatrix}
	0&\operatorname{Id}_{n-1}\\
	\operatorname{Id}_{n-1}&0\\
\end{pmatrix}
$$
\end{thm}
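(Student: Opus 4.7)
The plan is to exhibit a rank-$(n-1)$ totally isotropic direct summand of $G$ for the form $m_{2,2}$ and then upgrade a compatibly chosen basis to a hyperbolic one.

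First, I would verify that $\operatorname{Im}(d_3) \subset G$ is totally isotropic. The Leibniz rule applied to $x \cdot y$ for $x \in F_2$, $y \in F_3$, together with $F_5 = 0$, yields the identity $m_{2,2}(x, d_3(y)) = -m_{1,3}(d_2(x), y)$ in $F_4 \simeq R$. Setting $x = d_3(x')$ and using $d_2 \circ d_3 = 0$ gives $m_{2,2}(d_3(x'), d_3(y)) = 0$. By the Buchsbaum--Eisenbud exactness criterion $\operatorname{rank}(d_3) = n-1$, so $\operatorname{Im}(d_3)$ attains the maximal possible isotropic rank.

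Second, I would argue that $m_{2,2}$ is nondegenerate. The Gorenstein hypothesis endows $\mathbb{F}$ with a self-duality $\mathbb{F} \simeq \mathbb{F}^*$; by minimality together with the associativity of the DGA structure, the isomorphism $F_2 \to F_2^*$ realizing this self-duality must coincide, up to a unit, with the map induced by $m_{2,2}$, so the latter is perfect. Passing to the saturation $V := \operatorname{Im}(d_3)^{\mathrm{sat}} \subset G$ produces a saturated rank-$(n-1)$ submodule that remains totally isotropic, and nondegeneracy forces $V = V^\perp$. The composite $G \xrightarrow{m_{2,2}} G^* \twoheadrightarrow V^*$ then has kernel $V$, identifying $G/V$ with the free module $V^*$, so $V$ is a direct summand of $G$.

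Third, I would complete a basis $e_1, \ldots, e_{n-1}$ of $V$ to a basis of $G$ by choosing $\tilde f_1, \ldots, \tilde f_{n-1}$ in a free complement. The matrix of $m_{2,2}$ takes block form $\bigl(\begin{smallmatrix}0&A\\ A^T&B\end{smallmatrix}\bigr)$ with $B^T = B$; its determinant is $\pm\det(A)^2$, so nondegeneracy forces $A$ to be invertible over $R$. After replacing $\tilde f_j$ by $\sum_i (A^{-1})_{ij}\tilde f_i$ one may assume $A = I$, and then the substitution $f_j := \tilde f_j - \tfrac{1}{2}\sum_i B_{ij} e_i$ finishes the job: a short computation confirms that in the basis $\{e_i, f_j\}$ the matrix of $m_{2,2}$ is hyperbolic. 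The Gram--Schmidt step is the only place where invertibility of $2$ is used.

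The main obstacle is the nondegeneracy of $m_{2,2}$ together with the fact that $V$ is a direct summand of $G$; both rely on the subtle interplay between minimality of $\mathbb{F}$, its graded-commutative associative multiplication, and Gorenstein self-duality. Once these structural facts are in hand, the construction of a hyperbolic basis is routine linear algebra over $R$.
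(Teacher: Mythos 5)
The paper cites this result as Theorem 4.2 of Celikbas--Laxmi--Weyman \cite{celikbas-laxmi-weyman} without reproducing a proof, so I assess your argument on its own terms. Your first step (isotropy of $\operatorname{Im}(d_3)$ via the Leibniz rule on $F_2\cdot F_3\subset F_5=0$ and $d_2 d_3=0$) and your third step (the Gram--Schmidt reduction using $\tfrac{1}{2}\in R$) are both correct, and nondegeneracy of $m_{2,2}$ is indeed a known Kustin--Miller consequence of the Gorenstein property, though the justification you offer for it essentially restates the conclusion.

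The genuine gap is your second step: the proposed free maximal isotropic direct summand does not exist. Because the resolution is exact at $G$, we have $G/\operatorname{Im}(d_3)\cong\operatorname{Im}(d_2)\subset F^*$, which is torsion-free; hence $\operatorname{Im}(d_3)$ is already saturated and your $V$ is nothing but $\operatorname{Im}(d_3)$. But $V\cong F/\operatorname{Im}(d_4)$ with $\operatorname{Im}(d_4)\subset\mathfrak{m}F$ by minimality of the resolution, so $V$ has projective dimension exactly one and is therefore not free --- in particular it cannot be a direct summand of the free module $G$. The assertion that $V^*$ is free is also false: dualizing $0\to R\to F\to V\to 0$ and using self-duality of the resolution identifies $V^*\cong\ker(d_1)=\operatorname{Im}(d_2)$, which has projective dimension two. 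So the exact sequence $0\to V\to G\to V^*\to 0$ obtained from the form is genuinely non-split, and the step ``so $V$ is a direct summand of $G$'' is wrong. Producing a free isotropic summand of rank $n-1$ requires a different mechanism --- for instance observing that $\operatorname{Im}(d_3)$ forces the form to be hyperbolic over $\operatorname{Frac}(R)$ and then descending to $R$ via completeness/Hensel lifting, or via injectivity of $W(R)\to W(\operatorname{Frac}(R))$ together with Witt cancellation --- and it is precisely here that the hypotheses on $R$ (completeness and $\tfrac{1}{2}\in R$, or quadratic closure of the base field) come into play, none of which your argument actually invokes at this critical step.
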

\noindent We denote the hyperbolic basis of $G$ by $ \lbrace e_1, \ldots, e_{n-1}, \hat{e}_{1}, \ldots, \hat{e}_{n-1}  \rbrace$. This means that $e_i\cdot e_j=\hat{e}_{i}\cdot \hat{e}_{j}=0$ and $e_i\cdot\hat{e}_j=\delta_{i,j}$ for all $i,j=1,\cdots,n-1$.

Under the assumptions of Theorem \ref{th:hyprbolic} on the base ring $R$, there exists a \emph{spinor structure} on the resolution $\mathbb{F}$ (see \cite{celikbas-laxmi-weyman}). Let $\mathfrak{so}(G)\simeq\mathfrak{so}(2n-2)$ be the orthogonal Lie algebra and let $V$ be the fundamental representation of $\mathfrak{so}(2n-2)$ with the highest weight $\omega_{n-1}$. This is one of the half-spinor representations of the orthogonal Lie algebra. There exists a map $\widetilde{a_3}:R\rightarrow V\otimes R$ such that the following diagram commutes:
$$\begin{tikzcd}
R \arrow[rr,"S_2(\widetilde{a}_3)"]\arrow[rrd,"a_3"]&& S_2 V\otimes R\arrow[d,"\textbf{p}\otimes R"]\\
&& \bigwedge^{n-1}G
\end{tikzcd}$$
Here $a_3:R\rightarrow\bigwedge^{n-1}G$ is the structure map given by the First Structure Theorem of Buchsbaum and Eisenbud \cite{BE74}, while $\textbf{p}:S_2 V \rightarrow\bigwedge^{n-1} G$ is the unique $\mathfrak{so}(2n-2)$-equivariant map up to scale. The coordinates of the map $\widetilde{a}_3$ are called the \emph{spinor coordinates}. Let $[k]:=\{1,\cdots,k\}$, $K\subset [n-1] $ and let $J:=K\cup \left(2\cdot[n-1]\setminus2 K\right)$. It follows from the above diagram that the spinor coordinate ${\widetilde{a}_3}^*(J)$ is the square of the Buchsbaum-Eisenbud multiplier $a_3^*(K)$.
For concrete examples of computations of spinor coordinates see \cite{celikbas-laxmi-weyman} and Section \ref{section:hsm1} of this work.

\subsection{Lie algebras and representations}

For the remainder of \S\ref{section:pre}, we will work over $\mb{C}$. In particular, the ring $R$ is assumed to be a $\mb{C}$-algebra. We will drop this assumption in \S\ref{section:hsm1} onwards.

\subsubsection{Construction}\label{bg:lie-construction}
Fix integers $p,q,r \geq 1$, and let $T=T_{p,q,r}$ denote the graph
\begin{center}
		$\begin{tikzpicture}
			\node 
            [circle,fill=white,draw,label=above:$x_{p-1}$] (1) at (0,0) {};
			\node
		[circle,label=below:$\cdots$] (2) at (2,0) {};
			\node
            [circle,fill=white,draw,label=above:$x_1$] (3) at (4,0) {};
			\node
		[circle,fill=white,draw,label=above:$u$] (4) at (6,0) {};
			\node
		[circle,fill=white,draw,label=above:$y_1$] (5) at (8,0) {};
			\node
		[circle,label=below:$\cdots$] (6) at (10,0) {};
			\node
		[circle,fill=white,draw,label=above:$y_{q-1}$] (7) at (12,0) {};
                \node
            [circle,fill=white,draw,label=right:$z_{1}$] (8) at (6,-2) {};
                \node
		[circle,label=center:$\cdots$] (9) at (6,-4) {};
                \node
            [circle,fill=white,draw,label=right:$z_{r-1}$] (10) at (6,-6) {};

            \draw (1) to (2);
            \draw (2) to (3);
            \draw (3) to (4);
            \draw (4) to (5);
            \draw (5) to (6);
            \draw (6) to (7);
            \draw (4) to (8);
            \draw (8) to (9);
            \draw (9) to (10);

\end{tikzpicture}$
\end{center}

Let $n = p+q+r-2$ be the number of vertices. From the above graph, we construct an $n\times n$ matrix $A$, called the \emph{Cartan matrix}, whose rows and columns are indexed by the nodes of $T$:
\[
A = (a_{i,j})_{i,j \in T}, \quad a_{i,j} = \begin{cases}
	2 &\text{if $i = j$,}\\
	-1 &\text{if $i,j \in T$ are adjacent,}\\
	0 &\text{otherwise.}
\end{cases}
\]
$T$ is a Dynkin diagram if and only if $1/p + 1/q + 1/r > 1$; in this case we say it is of \emph{finite type}. We will primarily be interested in the diagrams $E_n \coloneqq T_{3,n-3,2}$, which are of finite type when $n \leq 8$. We next describe how to construct the associated Lie algebra $\mf{g}$.

Let $\mf{h} = \mb{C}^{2n - \rank A}$, and pick independent sets $\Pi = \{\alpha_i\}_{i\in T} \subset \mf{h}^*$ and $\Pi^\vee = \{\alpha_i^\vee\}_{i\in T} \subset \mf{h}$ satisfying the condition
\[
\langle \alpha_i^\vee,\alpha_j \rangle = a_{i,j}.
\]
The $\alpha_i$ are the \emph{simple roots} and the $\alpha_i^\vee$ are the \emph{simple coroots}. If $1/p + 1/q + 1/r = 1$, then $T = E_{n-1}^{(1)}$ is of \emph{affine type} and $\rank A = n-1$. Otherwise $\rank A = n$, and $\Pi, \Pi^\vee$ are bases of $\mf{h}^*,\mf{h}$ respectively.

The Lie algebra $\mf{g} := \mf{g}(T)$ is generated by $\mf{h}$ together with elements $e_i,f_i$ for $i\in T$, subject to the defining relations
\begin{gather*}
	[e_i,f_j] = \delta_{i,j} \alpha_i^\vee,\\
	[h,e_i] = \langle h, \alpha_i \rangle e_i, [h,f_i] = -\langle h,\alpha_i \rangle f_i  \text{ for } h \in \mf{h},\\
	[h,h'] = 0 \text{ for } h,h' \in \mf{h},\\
	\ad(e_i)^{1-a_{i,j}}(e_j) = \ad(f_i)^{1-a_{i,j}}(f_j) \text{ for } i \neq j.
\end{gather*}
Under the adjoint action of $\mf{h}$, the Lie algebra $\mf{g}$ decomposes into eigenspaces as $\mf{g} = \bigoplus \mf{g}_\alpha$, where
\[
\mf{g}_\alpha = \{x \in \mf{g} : [h,x] = \alpha(h)x \text{ for all } h \in \mf{h}\}.
\]
This is the \emph{root space decomposition} of $\mf{g}$.

Note that if $T$ is \emph{not} of affine type, then the Cartan matrix is invertible, and $\mf{g}$ is generated by $e_i$ and $f_i$ for $i \in T$ since $\alpha_i^\vee$ is a basis of $\mf{h}$. We will later only be interested in the case that $T$ is one of the finite type diagrams $E_n$ for $6 \leq n \leq 8$.

\subsubsection{Gradings on $\mf{g}$}\label{bg:lie-grading1}

Assume that $T$ is not of affine type. Let $Q \subset \mf{h}^*$ be the root lattice $\bigoplus_{i\in T} \mb{Z}\alpha_i$. If $\mf{g}_\alpha \neq 0$, then necessarily $\alpha \in Q$. If such an $\alpha$ is nonzero, we say it is a \emph{root}, and denote the set of all roots by $\Delta$. Hence the Lie algebra $\mf{g}$ is $Q$-graded. By singling out a vertex $t \in T$, this $Q$-grading can be coarsened to a $\mb{Z}$-grading by considering only the coefficient of $\alpha_t$. We refer to this as the $\alpha_t$-grading.

Write $h_i \in \mf{h}$ for the basis dual to the simple roots $\alpha_i \in \mf{h}^*$. The degree zero part of $\mf{g}$ in the $\alpha_t$-grading is
\[
\mf{g}^{(t)} \times \mb{C}h_t
\]
where $\mf{g}^{(t)}$ is the subalgebra generated by $\{e_i, f_i\}_{i \neq t}$ and $\mb{C}h_t$ is the one-dimensional abelian Lie algebra spanned by $h_t$. The decomposition of $\mf{g}$ into $\alpha_t$-graded components is just its decomposition into eigenspaces for the adjoint action of $h_t$:
\[
\mf{g} = \bigoplus_{j  \in \mb{Z}} \ker(\ad(h_t) - j).
\]

\begin{example}[$\alpha_2$-grading on $E_n$]\label{ex:2-grading}
	We label the simple roots of $E_n$ as follows:
	\begin{center}
		$\begin{tikzpicture}
			\node
			[circle,fill=white,draw,label=above:$\alpha_1$] (1) at (0,0) {};
			\node
			[circle,fill=white,draw,label=above:$\alpha_3$] (2) at (2,0) {};
			\node
			[circle,fill=white,draw,label=above:$\alpha_4$] (3) at (4,0) {};
			\node
			[circle,fill=white,draw,label=right:$\alpha_2$] (4) at (4,-2) {};
			\node
			[circle,fill=white,draw,label=above:$\alpha_5$] (5) at (6,0) {};
			\node
			[circle,fill=white,draw,label=above:$\alpha_6$] (6) at (8,0) {};
			\node
			[circle,label=below:$\cdots$] (7) at (10,0) {};
			\node
			[circle,fill=white,draw,label=above:$\alpha_n$] (8) at (12,0) {};
			\draw (1) to (2);
			\draw (2) to (3);
			\draw (3) to (4);
			\draw (3) to (5);
			\draw (5) to (6);
			\draw (6) to (7);
			\draw (7) to (8);
		\end{tikzpicture}$
	\end{center}
	Let $F_1 = \mb{C}^{n}$ and $F_4 = \mb{C}^1$. (This notation will be clarified in \S\ref{section:hsm2}.) Then $\mf{g}^{(2)} = \mf{sl}(F_1) = \mf{sl}(F_1) \times \mf{sl}(F_4)$.
	
	For $n \leq 8$, the $\alpha_2$-graded decomposition of $E_n$ is
	\begin{align*}
		3 &\hspace{1in} S_{2,1^7} F_1 \otimes S_3 F_4^*\\
		2 &\hspace{1in} \bigwedge^6 F_1 \otimes S_2 F_4^*\\
		1 &\hspace{1in} \bigwedge^3 F_1 \otimes F_4^*\\
		0 &\hspace{1in} \mf{sl}(F_1) \times \mf{sl}(F_4) \times \mb{C}h_2\\
		-1 &\hspace{1in} \bigwedge^3 F_1^* \otimes F_4\\
		-2 &\hspace{1in} \bigwedge^6 F_1^* \otimes S_2 F_4\\
		-3 &\hspace{1in} S_{2,1^7} F_1^* \otimes S_3 F_4
	\end{align*}
	Note that $S_{2,1^7} F_1 = 0$ for $n \leq 7$, so there are only components in degrees $\pm 3$ for $n=8$. We have displayed each graded component as a representation of $\mf{gl}(F_1) \times \mf{gl}(F_4)$ using the map
	\[
		\mf{gl}(F_1) \times \mf{gl}(F_4) \to \mf{sl}(F_1) \times \mf{sl}(F_4) \times \mb{C}h_2
	\]
	which sends traceless matrices to themselves, the identity matrix in $\mf{gl}(F_1)$ to $3h_2$, and the identity matrix in $\mf{gl}(F_4)$ to $-h_2$.
\end{example}
\begin{example}[$\alpha_1$-grading on $E_n$]\label{ex:1-grading}
	Using the same labeling as in Example~\ref{ex:2-grading}, we have that $\mf{g}^{(1)} = \mf{so}(G)$ where $G = \mb{C}^{2n-2}$. Let $V$ denote the half-spinor representation of $\mf{so}(G)$ associated to the vertex $2 \in T$. If $n$ is odd, then $V$ is self-dual; otherwise it is dual to the other half-spinor representation.
	
	The $\alpha_1$-graded decompositions of $E_n$ for $6 \leq n \leq 8$ are
	\begin{align*}
		E_6 &= 0 \oplus V \oplus (\mf{so}(G) \times \mb{C}h_1) \oplus V^* \oplus 0\\
		E_7 &= \mb{C} \oplus V \oplus (\mf{so}(G) \times \mb{C}h_1) \oplus V^* \oplus \mb{C}\\
		E_8 &= G \oplus V \oplus (\mf{so}(G) \times \mb{C}h_1) \oplus V^* \oplus G
	\end{align*}
	where the graded components are displayed from degree $-2$ to $2$.
\end{example}

\subsubsection{Representations}\label{bg:reps}
Let $V$ be a representation of $\mf{g}$. For $\lambda \in \mf{h}^*$, define the \emph{$\lambda$-weight space of $V$} to be
\[
V_\lambda = \{v \in V : h v = \lambda(h)v \text{ for all }h \in \mf{h}\}.
\]
If $V_\lambda \neq 0$, then we say $\lambda$ is a \emph{weight} of $V$. A nonzero vector $v \in V_\lambda$ is a \emph{highest weight vector} if $e_i v = 0$ for all $i$. If such a $v$ generates $V$ as a $\mf{g}$-module, then we say $V$ is a \emph{highest weight module} with highest weight $\lambda$.

For each $\lambda$, there is a unique irreducible highest weight module with highest weight $\lambda$, which we denote by $V(\lambda)$ or $V(\lambda,\mf{g})$. If $\mf{g}$ is of finite type, then $V(\lambda)$ is finite-dimensional exactly when $\lambda$ is a dominant integral weight, i.e. a nonnegative integral combination of the fundamental weights $\omega_i \in \mf{h}^*$ dual to the simple coroots. Given a subalgebra $\mf{g}^{(t)}$ of $\mf{g}$, we write $V(\lambda,\mf{g}^{(t)})$ for the $\mf{g}^{(t)}$-representation generated by the highest weight space in $V(\lambda)$. Explicitly, if $\lambda = \sum c_i \omega_i$, the representation $V(\lambda,\mf{g}^{(t)})$ is the irreducible highest weight module of $\mf{g}^{(t)}$ with highest weight $\sum_{i \neq t} c_i \omega_i$.

We have already discussed how a vertex $t \in T$ induces a $\mb{Z}$-grading on $\mf{g}$, decomposing it into eigenspaces for the action of $h_t$. The representations $V(\lambda)$ also decompose into eigenspaces for the action of $h_t$, with eigenvalues $\langle h_t, \lambda \rangle, \langle h_t, \lambda \rangle-1,\ldots$, terminating iff $V(\lambda)$ is finite-dimensional. These eigenvalues may not be integers; to get a $\mb{Z}$-grading we often translate all degrees by $-\langle h_t, \lambda \rangle$ so that the top graded component is in degree 0.

\subsubsection{Connection to generic free resolutions}\label{subsec:motivation-1}
The Lie algebras defined in \S\ref{bg:lie-construction} appear in the study of generic free resolutions, which we now briefly survey. The classical Hilbert-Burch theorem gives a structure theorem for free resolutions of the form
\[
	0 \to R^{n-1} \to R^n \to R.
\]
In fact, it gives the universal such resolution, which specializes uniquely to any other free resolution with the same ranks of free modules. We refer to the sequence of ranks as the \emph{format} of the complex, thus Hilbert-Burch describes the universal free resolution of format $(1,n,n-1)$.

The situation for other length 2 formats $(f_0,f_1,f_2)$ with $f_0 > 1$ was studied by Hochster \cite{H75}, Huneke \cite{huneke}, and many others. In short, a universal free resolution also exists in these cases, and it essentially encodes the first structure theorem of Buchsbaum and Eisenbud, of which Hilbert-Burch is a special case.

However, for formats of length 3 and beyond, Bruns showed that universal free resolutions cannot exist \cite{bruns}. It is necessary to drop the requirement that the specialization is unique. We say that a particular free resolution $\mb{F}^{\mathrm{gen}}$ is \emph{generic} for its format if it specializes to any other resolution of the same format, and we refer to its base ring $R_{\mathrm{gen}}$ as a \emph{generic ring}.

If we have a homomorphism $w\colon R_{\mathrm{gen}}\to R$ specializing $\mb{F}^{\mathrm{gen}}$ to some resolution $\mb{F}$ over $R$, we would like to interpret $w$ as encoding ``higher structure maps'' for $\mb{F}$, in the same spirit as Hilbert-Burch or Buchsbaum-Eisenbud for the length 2 case. To do this, an explicit description of the generic ring is necessary. Note that unlike universal free resolutions, generic ones are not unique for a given format. For resolutions of length 3, a particular generic pair $(\Rgen,\Fgen)$ was constructed in \cite{Weyman89} and \cite{W18}. One of the main results of this study is that the Lie algebra $\mf{g}(T_{p,q,r})$ acts on the generic ring $\Rgen$ for the format $(p-1, p+q,q+r,r-1)$. Accordingly, the explicit higher structure maps extracted from this machinery are closely related to the representation theory of $\mf{g}$.

We have evidence to suggest that a similar situation holds for resolutions of codimension 4 Gorenstein ideals, i.e. self-dual resolutions of format $(1,n,2n-2,n,1)$. More precisely, we introduce two families of higher structure maps for such resolutions in \S\ref{section:hsm1} and \S\ref{section:hsm2}. While we give explicit ad-hoc definitions for both, the former family of higher structure maps actually has a general systematic construction (akin to that for $\Rgen$ in the length 3 case), but we lack such a construction for the latter family. These higher structure maps are related to the representation theory of $E_n$.

\subsubsection{Generic ring}\label{section:pre3}

Now we describe the construction of a generic ring  for the complex $\mathbb{F}'$, which is the resolution of a Gorenstein ideal of codimension 4 truncated at $F_1$. This is related to higher structure maps in the $\alpha_1$-grading (see \S\ref{section:hsm1}). To keep our notation consistent with the notes \cite{weyman-gorenstein}, we use the convention:
\begin{equation}\label{truncated}
\mathbb{F}':0\rightarrow R\xrightarrow{d_1^*} F\xrightarrow{d_2^*} G\xrightarrow{Qd_2} F^*
\end{equation}
We have a fixed identification $G\simeq G^*$ using a bilinear symmetric form $Q=(\cdot,\cdot)$. Let $\operatorname{Grass}(1,F)$ be the Grassmanian of lines in $F$, i.e. the projective space $\mathbb{P}(F)$, and let $\operatorname{IGrass}(n-1,G)$ be the Grassmanian of maximal isotropic subspaces of $G$. We have the corresponding tautological sequences
$$
0\rightarrow\mathcal{R}\rightarrow F\times\operatorname{Grass}(1,F)\rightarrow\mathcal{Q}\rightarrow 0,
$$
$$
0\rightarrow\mathcal{S}\rightarrow G\times\operatorname{IGrass}(n-1,G)\rightarrow\mathcal{S}^*\rightarrow 0.
$$
We define the first approximation of the generic ring to be $$A_1(n):=H^0\left(\operatorname{Grass}(1,F)\times\operatorname{IGrass}(n-1,G),\mathcal{O}_Z\right)$$ Here $\mathcal{O}_Z$ is the sheaf defined as
$$\mathcal{O}_Z:=\bigoplus_{a,b\geq 0, \lambda\vdash (n-1)}S_a\mathcal{R}^*\otimes S_\lambda\mathcal{Q}\otimes S_\lambda\mathcal{S}^*\otimes\mathcal{O}(b)/\mathcal{I},$$
where the ideal sheaf $\mathcal{I}$ is generated by $\bigwedge^{n-1}\mathcal{Q}\otimes\bigwedge^{n-1}\mathcal{S}^*$ and $\mathcal{R}^*\otimes\mathcal{O}(2)$. By Bott's theorem, higher cohomology groups of $\mathcal{O}_Z$ vanish, while $A_1(n)$ decomposes into irreducibles as
$$
\bigoplus_{a,b,\lambda}S_{(\lambda_1,\cdots,\lambda_{n-2},0,-a)}F \otimes V\Big((\lambda_1-\lambda_2)\omega_1+\cdots+(\lambda_{n-3}-\lambda_{n-2})\omega_{n-3}+b\omega_{n-2}+\lambda_{n-2}(\omega_{n-2}+\omega_{n-1}),D_{n-1}\Big),
$$
where $(\lambda_1,\cdots,\lambda_{n-1})=\lambda\vdash(n-1)$.

By construction, over the ring $A_1(n)$ there is a natural self-dual complex
$$
\mathbb{F}^1:0\rightarrow A_1(n)\rightarrow F\otimes A_1(n)\rightarrow G\otimes A_1(n)\simeq G^*\otimes A_1(n)\rightarrow F^*\otimes A_1(n)\rightarrow A_1(n)
$$ 
The homology groups $H_3(\mathbb{F}^1)$ and $H_4(\mathbb{F}^1)$ are zero. Similar to codimension three case, one performs a procedure of killing cycles in certain complexes constructed from $\mathbb{F}^k:=\mathbb{F}^{k-1}\otimes_{A(n)_{k-1}}A(n)_{k}$, $k\geq 1$, obtaining a sequence of rings $A(n)_k\hookrightarrow A(n)_{k+1}$. The union $A(n)_\infty:=\bigcup A(n)_k$ is a generic ring for the complex $\mathbb{F}'\otimes A(n)_\infty$ of length three. Thus the only non-zero homology groups of $\mathbb{F}^\infty:=\mathbb{F}^1\otimes_{A(n)_1} A(n)_\infty$ are $H_0(\mathbb{F}^\infty)$ and $H_1(\mathbb{F}^\infty)$.

Consider defects of the lifts required for the construction of each $A(n)_k$, i.e. modules $\mathbb{L}_k$ parametrizing different choices in the procedure of killing cycles. The ring $A(n)_\infty$ has a natural structure of a multiplicity-free representation of the defect Lie algebra $\mathbb{L}=\bigoplus_{k\geq 1}\mathbb{L}_k$. The algebra $\mathbb{L}$ is the positive part of the Lie algebra $\mathfrak{g}(A_{n-1})\oplus\mathfrak{g}(T_{n-2,2,3})=\mathfrak{sl}(n)\oplus\mathfrak{g}(E_n)$ in the grading induced by the root corresponding to the vertex $\alpha_1$ of $E_n$. The Lie algebra $\mathfrak{sl}(n)$ is identified with $\mathfrak{sl}(F)$. The decomposition of the generic ring $A(n)_\infty$ into irreducible representations is
$$
\bigoplus_{a,b,\lambda}S_{(\lambda_1,\cdots,\lambda_{n-2},0,-a)}F \otimes V\Big(a\omega_1+(b+\lambda_{n-2})\omega_2+\lambda_{n-2}\omega_3+(\lambda_{n-3}-\lambda_{n-2})\omega_{4}+\cdots+(\lambda_1-\lambda_2)\omega_n,E_n\Big)
$$
As always, we follow the Bourbaki convention for labeling the nodes. 

The ring $A(n)_\infty$ is generated by three \emph{critical representations}: $W_1:=F\otimes V(\omega_1,E_n)$, $W_0:=\mathbb{C}\otimes V(\omega_2,E_n)$ and $W_n:=F^*\otimes V(\omega_n,E_n)$. By construction, for every resolution of the form (\ref{truncated}), defined over a commutative ring $R$, there is a map $A(n)_\infty\rightarrow R$. Using this map, all the graded components of the critical representations are mapped to $R$ and this induces sequences of $R$-linear map called \emph{higher structure maps}. This is analogous to the situation in codimension three (see \cite{W18,Gue-Wey,GNW1}). We give explicit definitions of higher structure maps in Section \ref{section:hsm1}.

For example, the lowest component of $V(\omega_1,E_n)$ is the trivial representation of $\mathfrak{so}(G)$ and the map living in $F\otimes \mathbb{C}\simeq\operatorname{Hom(F,\mathbb{C})}$ is just the first differential. The lowest component of $V(\omega_n,E_n)$ is the standard representation $V(\omega_1,D_{n-1})=G$ and the map living in $F\otimes G\simeq F\otimes G^*\simeq \operatorname{Hom}(G,F)$ is the second differential. Finally, the lowest graded piece of $V(\omega_2,E_n)$ is the spinor representation $V(\omega_{n-1},D_{n-1})$ and the appropriate structure map is the spinor coordinate, living inside $\mathbb{C}\otimes V(\omega_{n-1},D_{n-1})\simeq\operatorname{Hom}\left(\mathbb{C},V(\omega_{n-1},D_{n-1})\right)$.

The proper \emph{higher} structure maps appear in the subsequent graded pieces of the critical representations. For three Dynkin cases of $n=6,7,8$ generators, there are finitely many graded components, since the corresponding graphs are of finite type. In the following section we describe all structure maps for $n=6$, i.e. for Gorenstein ideals with six generators. In general, we see infinitely many graded components and hence an infinite sequence of higher structure maps for ideals with $n\geq 9$ generators.

\subsubsection{Connection between gradings and higher structure maps}\label{subsec:motivation-2}

The gradings defined in \S\ref{bg:lie-grading1} play an important role in interpreting higher structure maps. We return to the length 3 setting to demonstrate. Fix a format $(1,2+q,q+r,r-1)$ and let $(\Rgen,\Fgen)$ be the associated generic pair as in \S\ref{subsec:motivation-1}. The Lie algebra $\mf{g}$ associated to $T = T_{2,q,r}$ acts on $\Rgen$. For convenience of exposition, let us assume that this algebra is of finite type. The dual of the representation $V(\omega_{x_1})$ appears inside $\Rgen$, so if we have a map $w\colon \Rgen \to R$ specializing $\Fgen$ to some resolutions $\mb{F}$, we may restrict to get a map
\[
	w^{(1)}\colon V(\omega_{x_1})^\vee \otimes R \to R.
\]
Let $F_1 = \mb{C}^{2+q}$ and $F_3 = \mb{C}^{r-1}$, so that $\mf{g}^{(z_1)} = \mf{sl}(F_1)\times \mf{sl}(F_3)$. If one decomposes $V(\omega_{x_1})^\vee$ with respect to the $\alpha_{z_1}$-grading, we can rewrite the above as
\[
	w^{(1)}\colon [F_1 \oplus (\bigwedge^3 F_1 \otimes F_3^*) \oplus \cdots] \otimes R \to R
\]
where $F_1$ is viewed as residing in degree 0. The restriction $w^{(1)}_0$ of this map to the bottom graded component recovers the first differential of $\mb{F}$, and by analyzing relations in the ring $\Rgen$, one can show that the restriction $w^{(1)}_1$ to the next graded component yields (part of) a multiplicative structure on $\mb{F}$. There are other structure maps $w^{(i)}$ that are defined by restricting $w$ to other representations inside of $\Rgen$, but we will omit discussing them for the moment.

Since we do not have a generic ring for length 4 resolutions, we will instead take an approach ``inverse'' to the above: by examining the decomposition of appropriate representations $V_i$, we attempt to define maps $w^{(i)}\colon V_i \otimes R \to R$, one graded component $w^{(i)}_j$ at a time, so that the whole map exhibits similar properties as the higher structure maps do in the length 3 case. This is admittedly ad-hoc, but for $E_6$ it is at least fairly tractable.

There are two main desirable properties of the higher structure maps in the length 3 case that we would like to emulate. The precise statements are somewhat technical, but for the map $w^{(1)}$, they can be summarized as follows:
\begin{enumerate}
	\item The entries of the map $w^{(1)}$ satisfy the Pl\"ucker relations, i.e. its symmetric square vanishes on all subrepresentations of $S_2 V(\omega_{x_1})^\vee$ except for the irreducible subrepresentation $V(2\omega_{x_1})^\vee$.
	\item If $T$ is of finite type and $\mb{F}^*$ is acyclic then the map $w^{(1)}$ is surjective.
	\item The bottom graded component of $w^{(1)}$ recovers the first differential of $\mb{F}$.
\end{enumerate}
We will use points (3) and (1) as guides for defining higher structure maps in the codimension 4 Gorenstein setting---loosely speaking, the bottom graded components should come directly from the resolution, and higher components should satisfy some relations that enable their computation in terms of lower ones already defined.

Points (1) and (2) together imply that $w^{(1)}$ defines a map from $\operatorname{Spec}R$ to a certain homogeneous space $G/P$ in its Pl\"ucker embedding. To explain this, we will now give some background on the relevant objects.

\subsection{$G/P$ and Schubert varieties}\label{sec:bg-schubert}
We assume throughout that $T$ is a Dynkin diagram and thus $\mf{g}$ is of finite type. There is a unique simply connected Lie group $G$ associated to the Lie algebra $\mf{g}$, and the representations of $G$ correspond to those of $\mf{g}$. For a fundamental weight $\omega_t$, the action of $G$ on the highest weight line in $\mb{P}(L(\omega_t))$ has stabilizer $P_t^+$, the subgroup of $G$ corresponding to a maximal parabolic subalgebra $\mf{p}_t^+$. Hence the orbit of this highest weight line can be identified with the homogeneous space $G/P_t^+$. For Dynkin type $A_n$ with the standard labeling of vertices, this construction produces the Grassmannian $\Gr(t,n+1)$. Accordingly, the reader may think of $G/P_t^+$ as a ``generalized Grassmannian.''

\subsubsection{Algebraic definition of the homogeneous space $G/P$}
Pick a vertex $t \in T$, let $\omega_t$ be the corresponding fundamental weight, and $V(\omega_t)$ the irreducible representation with highest weight $\omega_t$. Let $\mf{A} = \bigoplus_{n\geq 0} V(n\omega_t)^\vee$. This is a graded $\mb{C}$-algebra generated by $V(\omega_t)^\vee$ in degree 1:
\[
\bigoplus_{n\geq 0} V(n\omega_t)^\vee = (\Sym V(\omega_t)^\vee) / I_\Plucker.
\]
The ideal $I_\Plucker$ is comprised of subrepresentations vanishing on a highest weight vector $v \in V(\omega_t)$. We define $G/P_{t}^+ = \operatorname{Proj} \mf{A}$ to be the corresponding projective variety in $\mb{P}(V(\omega_t))$; this is the Pl\"ucker embedding of $G/P_{t}^+$.

\subsubsection{Weyl group and subgroups}
Let $W$ denote the Weyl group associated to $T$. It is generated by the  \emph{simple reflections} $\{s_i\}_{t\in T}$. Explicitly,
\[
W = \langle \{s_i\}_{i\in T} \mid (s_i s_j)^{m_{ij}} = 1\rangle
\]
where $m_{ij} = 1$ if $i=j$, $m_{ij} = 2$ if $i,j \in T$ are not adjacent, and $m_{ij} = 3$ if $i,j \in T$ are adjacent. The group $W$ is finite if and only if $T$ is a Dynkin diagram.

The Weyl group acts on $\mf{h}^*$: the simple reflections act via
\[
s_i(\lambda) = \lambda - \langle \alpha_i^\vee, \lambda\rangle \alpha_i.
\]
For any $t \in T$, the expression
\[
\exp(f_t)\exp(-e_t)\exp(f_t)
\]
defines an automorphism of any representation on which the actions of $e_t$ and $f_t$ are locally nilpotent. This includes the adjoint representation of $\mf{g}$ and $V(\lambda)$ for any dominant integral $\lambda$. By abuse of notation, we will also denote this automorphism by $s_t$, although it is really a (non-unique) lift thereof. For any element of the Weyl group, we may define an analogous automorphism by expressing $\sigma$ as a product of simple reflections. The resulting automorphism is not unique, but this is not important for our purposes.

A \emph{word} for $\sigma$ is a sequence of simple reflections whose product is $\sigma$. It is \emph{reduced} if there are no shorter words for $\sigma$. The \emph{length} $\ell(\sigma)$ is the length of a reduced word for $\sigma$. There is a partial order, called the \emph{(strong) Bruhat order} on $W$, defined so that $\sigma \geq \sigma'$ if a reduced word for $\sigma$ contains a reduced word for $\sigma'$ as a (not necessarily consecutive) substring. If this holds for some reduced word for $\sigma$, it in fact holds for all reduced words.

If $t \in T$, we let $W_t \subset W$ denote the subgroup generated by all simple reflections other than $s_t$. This is the stabilizer of the fundamental weight $\omega_t$ under the action of $W$. We write $W^t$ for the set of minimal length representatives of $W/W_t$.

\subsubsection{Pl\"ucker coordinates and Schubert cells}
Elements of $V(\omega_t)^\vee$ are called \emph{Pl\"ucker coordinates}. Let $p_e$ denote a lowest weight vector of $V(\omega_t)^\vee$ and let $p_\sigma = \sigma p_e$. The set $\{p_\sigma : \sigma\in W^{P_t}\}$ is the set of \emph{extremal Pl\"ucker coordinates}; they are defined up to scale. The representation $V(\omega_t)^\vee$ may have weights other than those in the $W$-orbit of $-\omega_t$, and thus there may be non-extremal Pl\"ucker coordinates. (The representation is \emph{miniscule} if all weights belong to the same $W$-orbit, in which case all Pl\"ucker coordinates are extremal.)

Let $v \in \mb{P}(V(\omega_t))$ denote the highest weight point, which we also think of as the point $P_t^+ / P_t^+ \in G/P_t^+$. By abuse of notation we will sometimes use $v$ to refer to a highest weight vector in $V(\omega_t)$ instead. Let $B^+$ denote the (positive) Borel subgroup of $G$. The point $v$ is the \emph{Borel-fixed point} of $G/P_t^+$. For $\sigma \in W^t$, the point $\sigma v$ is a \emph{torus-fixed point} of $G/P_t^+$.

Let $B^-$ denote the opposite (negative) Borel, and let $w \in W^t$. We define:
\begin{itemize}
	\item the Schubert cell $C_w = B^+ w v = B^+ w P_t^+/P_t^+$ and the Schubert variety $X_w = \overline{C_w}$,
	\item the opposite Schubert cell $C^w = B^- w v = B^- w P_t^+/P_t^+$ and the opposite Schubert variety $X^w = \overline{C^w}$.
\end{itemize}

The extremal Pl\"ucker coordinate $p_\sigma$ vanishes on $C_w$ (equivalently $X_w$) if and only if $\sigma \not\leq w$ in the Bruhat order. The coordinate $p_\sigma$ vanishes on $C^w$ (equivalently $X^w$) if and only if $\sigma \not\geq w$ in the Bruhat order. The dimension of $X_w$ is the dimension of $C_w$, which is $\ell(w)$.

All Schubert varieties are linearly defined, meaning that their homogeneous ideals in $\Sym V(\omega_t)^\vee$ are generated by the Pl\"ucker relations and some Pl\"ucker coordinates. The ideal may contain non-extremal Pl\"ucker coordinates, however.

\begin{example}\label{ex:Schubert-X^w}
	Recall that we are assuming $T = T_{p,q,r}$ to be a Dynkin diagram. Let $t = x_{p-1}$ and let $w = s_{z_1} s_u s_{x_1} \cdots s_{x_{p-1}} \in W^{x_{p-1}}$. The opposite Schubert variety $X^w$ inside of $G/P_{x_{p-1}}^+$ has codimension $\ell(w) = p+1$.
	
Modulo the Pl\"ucker relations, the ideal of $X^w$ is generated by the extremal Pl\"ucker coordinates
	\[
		p_e, p_{s_{x_{p-1}}}, p_{s_{x_{p-2}}s_{x_{p-1}}}, \ldots, p_{s_{y_{q-1}} \cdots s_{y_1} s_u s_{x_1} \cdots s_{x_{p-1}}}
	\]
	where the subscripts are all suffixes of the word $s_{y_{q-1}} \cdots s_{y_1} s_u s_{x_1} \cdots s_{x_{p-1}}$. These Pl\"ucker coordinates span the bottom $\alpha_{z_1}$-graded component of the representation $V(\omega_{x_{p-1}})^\vee$. There are no non-extremal Pl\"ucker coordinates in the ideal of $X^w$ in this case.
	
The opposite Schubert variety $X^w$ is the union of all opposite Schubert cells $C^\sigma$ with $\sigma \geq w$. Every opposite Schubert variety has an action of $B^-$ by construction, but particular ones may have more symmetry. In this case, there is an action of the negative maximal parabolic $P_{z_1}^-$ on $X^w$. Since this parabolic contains $B^-$, the orbits under this action are unions of opposite Schubert cells. Specifically, $C^\sigma$ and $C^{\sigma'}$ are contained in the same $P_{z_1}^-$-orbit exactly when $\tau \sigma = \sigma'$ for some $\tau \in W_{z_1}$. Thus the $P_{z_1}^-$-orbits in $G/P_{x_{p-1}}^+$ are indexed by the double cosets $W_{z_1} \backslash W / W_{x_{p-1}}$, and $X^w$ is the union of all orbits except for the one corresponding to the identity $e \in W$, which is the open orbit.
	
In particular, for any $q \in X^w$, the local ring of $X^w$ at $q$ is isomorphic to the local ring of $X^w$ at a torus-fixed point $\sigma v$ where $\sigma \in W$ is a minimal length representative of its double coset in $W_{z_1} \backslash W / W_{x_{p-1}}$. Furthermore, this local ring is a cone over the local ring of the Kazhdan-Luztig variety $\mathcal{N}_\sigma^w \coloneqq X^w \cap C_\sigma$ at $\sigma v$.
\end{example}

\subsubsection{Geometric interpretation of higher structure maps}\label{subsec:motivation-3}

Let $I \subset R$ be a codimension 4 Gorenstein ideal, and let $\mb{F}$ be a resolution of $R/I$, with format $(1,n,2n-2,n,1)$. Let $T = E_n = T_{3,n-3,2}$ and let $\mf{g}$ be the associated Lie algebra. We use the numbering of vertices as in Example~\ref{ex:2-grading}. Assume that $T$ is a Dynkin diagram, i.e. $n \leq 8$. Our main goal is to define higher structure maps for $\mb{F}$ with properties emulating those given at the end of \S\ref{subsec:motivation-2}. In geometric language, these will be as follows:

\begin{enumerate}
	\item The entries of $w^{(1)}$ satisfy the Pl\"ucker relations, so $w^{(1)}$ defines a map from $\operatorname{Spec} R$ to the affine cone over $G/P_{1}^+$ in its Pl\"ucker embedding.
	\item $w^{(1)}$ is surjective, so the above also gives a well-defined map $f\colon \operatorname{Spec} R \to G/P_{1}^+$.
	\item The bottom $\alpha_2$-graded component of $w^{(1)}$ recovers $I$, so if we let $w = s_2 s_4 s_3 s_1$ be as in Example~\ref{ex:Schubert-X^w}, then $f^{-1} X^w = \operatorname{Spec} R/I$ as a closed subscheme of $\operatorname{Spec} R$.
\end{enumerate}

To be more precise, we will define \emph{two} families of higher structure maps in \S\ref{section:hsm1} and \S\ref{section:hsm2}, corresponding to the vertices $1$ and $2$ on the diagram $E_n$ respectively. The aforementioned $w^{(1)}$ is one of the maps we define in \S\ref{section:hsm2}. Condition (3) will be automatic from our construction. To inductively define higher components of $w^{(1)}$, it will be necessary to define some other structure maps in that section as well. Since all resolutions are split exact on a dense open set, it suffices to confirm (1) for higher structure maps on a split exact complex, which is why most of the calculations in the subsequent sections will be done in this setting.

For (2), we will show that $w^{(1)}$ gives a well defined map to $G/P_{x_1}^+$ by showing that it descends from a map $\operatorname{Spec} R \to G$. The key point is that $G$ is a linear algebraic group, in particular affine, so by algebraic Hartogs any map $\operatorname{Spec} R - V(I) \to G$ must extend to the entirety of $\operatorname{Spec} R$ since $I$ has codimension $4 \geq 2$. As such, it suffices to construct the desired map locally on $\operatorname{Spec} R - V(I)$, on which $\mb{F}$ is split exact. The only concern is that, working locally on this set, the pieces might not glue together. It turns out that the gluing works if we fix choices of \emph{both} families of higher structure maps for $\mb{F}$, so this is where the maps from \S\ref{section:hsm1} are needed. (Our actual argument is more algebraic: we exhibit $w^{(1)}$ as a row of an invertible square matrix, from which it follows that $w^{(1)}$ must be surjective. We construct this square matrix on two different localizations of $R$ and then argue that they must agree. But this square matrix is really the action of some $R$-point of $G$ on a representation, so this is effectively the same argument in different language.)

If $R$ is local, then by combining (2) and (3) with Example~\ref{ex:Schubert-X^w}, we conclude that $R/I$ is a specialization of one of the local rings $\mathcal{O}_{\mathcal{N}_\sigma^w, \sigma v}$ where $\sigma \in W$ is a minimal length representative of its double coset in $W_{2} \backslash W / W_{1}$.

\section{Higher structure maps from the grading induced by $\alpha_1$}\label{section:hsm1}

In this and in the next section, we consider higher structure maps with respect to the grading induced by the nodes of the following diagram $E_n$ (see Example \ref{ex:2-grading}).

\begin{center}
		$\begin{tikzpicture}
			\node
			[circle,fill=white,draw,label=above:$\alpha_1$] (1) at (0,0) {};
			\node
			[circle,fill=white,draw,label=above:$\alpha_3$] (2) at (2,0) {};
			\node
			[circle,fill=white,draw,label=above:$\alpha_4$] (3) at (4,0) {};
			\node
			[circle,fill=white,draw,label=right:$\alpha_2$] (4) at (4,-2) {};
			\node
			[circle,fill=white,draw,label=above:$\alpha_5$] (5) at (6,0) {};
			\node
			[circle,fill=white,draw,label=above:$\alpha_6$] (6) at (8,0) {};
			\node
			[circle,label=below:$\cdots$] (7) at (10,0) {};
			\node
			[circle,fill=white,draw,label=above:$\alpha_n$] (8) at (12,0) {};

                \draw (1) to (2);
			\draw (2) to (3);
			\draw (3) to (4);
			\draw (3) to (5);
			\draw (5) to (6);
			\draw (6) to (7);
			\draw (7) to (8);
\end{tikzpicture}$
\end{center}
In this section, we look specifically at the grading induced by the node $\alpha_1$.

\subsection{First graded components}\label{sec:firstcomponents,alpha1}

The higher structure maps in the first graded components of the critical representations $W_1$ and $W_{n}$ can be computed by lifting cycles in some appropriate exact complexes. 
We give an interpretation of these cycles in terms of the spinor coordinates, valid for all $n$. The computations that we will perform in this section are similar to those obtained for ideals and modules of codimension 3 in \cite{Gue-Wey}, \cite{GNW1}, \cite{Gue-SAF}.

Given a Gorenstein ideal $I$ of codimension 4 in a Noetherian ring $R$, we consider its minimal free resolution $\FF$ adopting the same notation used in (\ref{complexF}). In particular, the basis of the middle module $G$ is assumed to be in the hyperbolic form.
Let $n$ be the minimal number of generators of $I$.

The three critical representations are denoted $W_1$, $W_{n}$, $W_0$ and the linear maps corresponding to the Schur functors in each representation are denoted by $w^{(i)}_{j,k}$, where $i$ stands for the index of the critical representation, $j$ for the graded component and the index $k$ is used to distinguish different maps appearing in the same graded component. However, when there is a unique map in a given graded component, we will omit the index $k$.

After tensoring the critical representations with the ring $R$, we define the higher structure maps corresponding to the zero and first graded components.
The maps \mbox{$w^{(1)}_{0}: F \to G^*=G$} and \mbox{$w^{(n)}_{0}: R \to F$}  correspond to the differentials $d_3$ and $d_4$ of $\FF$, while the third map in the lowest degree \mbox{$w^{(0)}_{0}: V(\omega_{2}, D_{n-1}) \to R$} assigns to each generator of the half-spinor representation $ V(\omega_{2}, D_{n-1})$ the corresponding spinor coordinate in $R$ (see Section \ref{section:pre}).

We identify the generators of the half-spinor representation $V(\omega_{2}, D_{n-1})$ with subsets of \mbox{$\lbrace 1, \ldots, n-1 \rbrace$} of even cardinality (including the empty set). The generators of $V(\omega_{3}, D_{n-1})$ are instead associated to the subsets of even cardinality. For every subset $H \subseteq \lbrace 1, \ldots, n-1 \rbrace$ we denote by $u_H$ the corresponding element of one of the half-spinor representation. If $H=\lbrace  i_1, \ldots, i_t \rbrace$, we sometimes abuse the notation by writing $u_{i_1 \ldots i_t}:=u_{\lbrace i_1, \ldots, i_t \rbrace}$. We also denote by $u_{H}^{*}:=u_J$ the dual element of $u_H$, where $J:= \lbrace 1, \ldots, n-1 \rbrace \setminus H$. 

We start by describing the map corresponding to the first graded component of $W_{n}$. It is defined by a lift of a certain cycle along the third differential. More precisely, the higher structure map $w^{(n)}_1: V(\omega_{3}, D_{n-1}) \to F$ is obtained by lifting the image of the map \mbox{$q^{(n)}_1:V(\omega_{3}, D_{n-1}) \to G$} defined for $u_H$ with $|H|$ odd as
$$ q^{(n)}_1(u_H)= \sum_{i \not \in H} w^{(0)}_0(u_{H \cup \lbrace i \rbrace}) e_i +  \sum_{i \in H} w^{(0)}_0(u_{H \setminus \lbrace i \rbrace}) \hat{e}_i. $$

The composition $d_2 \circ q^{(n)}_1$ is zero. To see this we argue as in the proof of \cite[Theorem 2.1]{Guerrieri-Ni-Weyman}. It is clear that structure maps of any localization of $\FF$ are obtained as localizations of the structure maps of $\FF$. Moreover, the relations defining structure maps are equivariant with respect to any change of basis in $F$ and $G$. 
Take any non-zero divisor $x\in I$, which exists since by assumptions $I$ has codimension 4. Over the localization $R_x$ the complex $\mathbb{F}$ splits and $\operatorname{Spec} R_x$ is dense in $\operatorname{Spec} R$. Thus it is enough to prove the hypothesis when $\mathbb{F}$ is split exact. Consider the split exact complex $\FF^{split}$ of the same format as $\FF$ and with the differentials such that $d_4(1_{F_4})= f_n$, $d_3(f_i)= \hat{e}_i$ for $i \leq n-1$, $d_3(f_n)=0$, 
$d_2(e_i)= f_i^*$, $d_2(\hat{e}_i)= 0$, $d_1(f_i^{*}) = 0$  for $i \leq n-1$, $d_1(f_n^*)=1$. All the spinor coordinates of $\FF^{split}$ are zero, except for $w^{(0)}_0(u_{\emptyset})= 1$.
It follows that on $\FF^{split}$ we have
$$
q^{(n)}_1(u_{i})=\hat{e_i},\quad q^{(n)}_1(u_H)=0\textnormal{ if }|H|>1.
$$
The map $q^{(n)}_1$ is defined in equivariant way with respect to change of basis of $F$ and $G$.
It follows that $q^{(n)}_1(u_H)$ is a cycle also for the original complex $\FF$. The image of $q^{(n)}_1$ can be lifted along the differential $d_3$ to an element of $F$, obtaining the map $w^{(n)}_1: V(\omega_{3}, D_{n-1}) \to F$. Of course, like the multiplicative structure of an exact complex of length 3, this map is not unique. Any two lifts differ by an element of $\ker(d_3)= \operatorname{im}(d_4)$.

Similarly, we can consider the map associated to the first graded component of the critical representation $W_1$. We need a higher structure map $w^{(1)}_1: V(\omega_{2}, D_{n-1}) \otimes F \to R$. Denote the entries of $d_3$ by $y_{ih}= \langle d_3(f_h), e_i^* \rangle$, $\hat{y}_{ih}= \langle d_3(f_h), \hat{e}_i^* \rangle.$
To compute $w^{(1)}_1$ we consider the map $q^{(1)}_1: V(\omega_{2}, D_{n-1}) \otimes F \to F$ defined, for $H$ such that $|H|$ is even, by 
$$ q^{(1)}_1(u_H \otimes f_h) =  \sum_{i \not \in H} \hat{y}_{ih} w^{(n)}_1 (u_{H \cup \lbrace i \rbrace}) +  \sum_{i \in H} y_{ih}  w^{(n)}_1 (u_{H \setminus \lbrace i \rbrace})  - w^{(0)}_0 (u_H) f_h.  $$
Again checking over a split exact complex one can show that image of $q^{(1)}_1$ is a cycle in the complex $\FF$. Therefore it can be lifted to $R$ along the differential $d_4$, obtaining the map $w^{(1)}_1$.

\subsection{The format $E_6$}\label{sec:E6,alpha1}

We look now in detail at the higher structure maps corresponding to the format $E_6$ in the grading induced by the vertex $\alpha_1$.
Resolutions of length 4 and format $E_6$ of Gorenstein ideals correspond to Gorenstein ideals with 6 generators. Thus the minimal free resolution $\mathbb{F}$ is:
\begin{equation}
\label{complexF6}
\FF: 0 \longrightarrow R \buildrel{d_4}\over\longrightarrow R^6 \buildrel{d_3}\over\longrightarrow  R^{10} \buildrel{d_2}\over\longrightarrow R^6 \buildrel{d_1}\over\longrightarrow R.
\end{equation}
In this case the two half-spinor representations of $\mathfrak{so}(10)$ are denoted by $V(\omega_2, D_5)$ and $V(\omega_3, D_5)$.
The three critical representations for this format are: 
$$W_1=F^* \boxtimes [\C \oplus V(\omega_2, D_5) \oplus G ],  $$
$$W_6=F \boxtimes [G \oplus V(\omega_3, D_5) \oplus \C ],$$
$$W_0= \C \boxtimes \Big[V(\omega_2, D_5) \oplus [\C \oplus \mathfrak{so}(10)] \oplus V(\omega_3, D_5)\Big]$$
Note that $W_1\simeq W_6^*$ and $W_0$ is self-dual.

We adopt the same notation as in Section \ref{sec:firstcomponents,alpha1}, i.e. $u_H$ denotes the element of the half-spinor representations corresponding to a subset $H \subseteq \lbrace 1, \ldots, 5 \rbrace$. If $|H|$ is even, $u_H$ is an element of $V(\omega_3, D_5)$, otherwise of $V(\omega_2, D_5)$. The maps $w^{(1)}_0$, $w^{(6)}_0$, $w^{(0)}_0$, $w^{(1)}_1$, $w^{(6)}_1$ can be computed as described in Section \ref{sec:firstcomponents,alpha1}. We will look at explicit computations in the case of a split exact complex and in the case of the free resolution of an hyperplane section of a generic Gorenstein ideal of codimension 3 with 5 generators.
Now we describe how to compute the maps $w^{(1)}_2$, $w^{(6)}_2$ and the higher structure maps associated to $W_0$.

For the map $w^{(1)}_2: G \to F$ we first consider a composition map $$ q^{(1)}_2: G \to V(\omega_2, D_5) \otimes V(\omega_2, D_5) \to F \otimes F $$ defined by
$$ q^{(1)}_2(e_i)= \sum_{j \neq i} (-1)^{i+j-1} w^{(1)}_1(u_{j}) \otimes w^{(1)}_1(u_{ij}^*), \mbox{ and }$$
$$ q^{(1)}_2(\hat{e}_i)=  \sum_{j,k \neq i} (-1)^{i+j+k-1} w^{(1)}_1(u_{ijk}) \otimes w^{(1)}_1(u_{jk}^*) - w^{(1)}_1(u_{i}) \otimes w^{(1)}_1(u_{\emptyset}^*). $$
\noindent The image of $ q^{(1)}_2$ is a cycle in the exact complex $\FF \otimes F$ and can be lifted to $F \cong F \otimes R$ along the map $\operatorname{id}_F \otimes d_4: F \otimes R \to F \otimes F$. 

For $w^{(6)}_2: F \to R$ we consider the map $q^{(6)}_2: F \to F $ defined by 
$$ q^{(6)}_2( f_h) =  \sum_{\footnotesize |H| \mbox{ odd }} (-1)^{\sigma(h,H)} w^{(6)}_1 (u_{H}^* \otimes f_h) \cdot w^{(1)}_1 (u_{H})  + w^{(1)}_2 (d_3(f_h)).  $$ Also in this case the image of this map lifts to $R$ along the differential $d_4$.

In the first and second graded components of the critical representation $W_0$ we have maps $w^{(0)}_{1,1}: R \to R$, $w^{(0)}_{1,2}: \mathfrak{so}(10) \to R$ and $w^{(0)}_{2}: V(\omega_2, D_5) \to R$. The sources of $w^{(0)}_{1,1}$ and $w^{(0)}_{1,2}$ can be identified with appropriate subrepresentations of $V(\omega_2, D_5) \otimes V(\omega_3, D_5)$. The tensor product $V(\omega_2, D_5) \otimes V(\omega_3, D_5)$ is multiplicity-free, so the identification is unique up to a scalar. For the first map we consider the trivial representation and identify the unit element of $R$ with the element
$$ \sum_{\footnotesize |H| \mbox{ odd }} (-1)^{\sigma(H)} u_H \otimes u_H^*. $$ 
For the second map recall that we have an isomorphism $\mathfrak{so}(10)\simeq \bigwedge^2\mathbb{C}^{10}$, which is the irreducible representation for $D_5$ with the highest weight $\omega_2$. The standard basis is given by $\theta_{ij}:=e_i\wedge e_j,\ \theta_{\widehat{i} \widehat{j}}:=\hat{e_i}\wedge\hat{e_j}$ for $1 \leq i < j \leq 5$ and $\theta_{i \widehat{j}}:=e_i\wedge\hat{e_j}$ for $1 \leq i,j \leq 5$. Of course $\theta_{12}$ is the highest weight vector. Computing the unique highest weight vector of $V(\omega_2,D_5)\otimes V(\omega_3,D_5)$ of weight $\omega_2$, we conclude that the second map must send
$$  \theta_{12}\mapsto \displaystyle\sum_{\substack{H\subset\{3,4,5\}, \\ \textnormal{|H| even}}}(-1)^{\sum_{j\in H}j}\cdot u_{\{1,2\}\cup H}\otimes u_{\{1,2\}\cup H^c}=$$
$$=u_{12}\otimes u_{12345}-u_{1234}\otimes u_{125}+u_{1235}\otimes u_{124}-u_{1245}\otimes u_{123}.$$
The entire map is then determined by $\mathfrak{so}(10)$-equivariance. For example if $g\in\mathfrak{so}(10)$ maps $e_1\mapsto \hat{e_2}$ and $e_2\mapsto\hat{e_1}$, we have $g.\theta_{12}=-\theta_{1\widehat{1}}-\theta_{2\widehat{2}}$ and consequently
$$
\theta_{1\widehat{1}}+\theta_{2\widehat{2}}\mapsto -u_\varnothing\otimes u_{12345}-u_{12}\otimes u_{345}+u_{34}\otimes u_{125}+u_{1234}\otimes u_{5}-u_{35}\otimes u_{124}$$
$$-u_{1235}\otimes u_{4}+u_{45}\otimes u_{123}+u_{1245}\otimes u_{3}.
$$
After this identification, the higher structure maps $w^{(1)}_2$ and $w^{(6)}_2$ are computed by composing the inclusions $\iota_1:R\hookrightarrow V(\omega_2, D_5) \otimes V(\omega_3, D_5)$ and $\iota_2:\mathfrak{so}(10)\hookrightarrow V(\omega_2, D_5) \otimes V(\omega_3, D_5)$ with the map
$$q^{(0)}_{1} = w^{(1)}_{1} \otimes w^{(0)}_{0}: V(\omega_2, D_5) \otimes V(\omega_3, D_5) \to F \otimes R $$
and then lifting to $R$ along the differential $d_4$ of $\FF$.

The last map $w^{(0)}_{2}$ is computed by lifting the image of the map $q^{(0)}_{2}: V(\omega_2, D_5) \to F $ defined by 
$$ q^{(0)}_{2}(u_H)= \sum_{i \not \in H} w^{(1)}_2(e_i) w^{(0)}_0(u_{H \cup \lbrace i \rbrace}) +  \sum_{i \in H} w^{(1)}_2(\hat{e}_i) w^{(0)}_0(u_{H \setminus \lbrace i \rbrace}) - w^{(1)}_1(u_i) w^{(0)}_{1,1}(1)$$
Since the critical representations all have three graded components, this gives the last of the higher structure maps.

\subsection{Split exact complex of format $E_6$}\label{sec:split,alpha1}

In the proof of our main structure theorem, the most important complex that we need is the most trivial one: the split exact complex. In general we observed that higher structure maps are lifts of certain cycles and thus not unique. This non-uniqueness can be accounted for by extending the ring $R$ with new independent variable, called \emph{defect variables}, and using them to parametrize choices of different lifts.
The computation of higher structure maps with generic defect variables for a split exact complex of formats $E_6$ 
is the starting point for the proof of Lemma \ref{lem:square-matrix}.

Let us then compute higher structure maps for the split exact complex of format $E_6$, i.e. with ranks $(1,6,10,6,1)$. %
In this case we compute each map choosing generic liftings, parametrized by the defect variables. 
In the next subsection we perform the same computation, but without introducing new variables, for the minimal free resolution of a hyperplane section of a Gorenstein ideal of codimension 3 with 5 generators. 
We show that these two complexes are dual to each other in the following sense: the matrices of the top components of the critical representations of one complex give exactly the lowest components, i.e. differentials and spinor coordinates, of the other. This duality points directly to the fact that hyperplane sections of Gorenstein ideals of codimension 3 with 5 generators are the generic model for Gorenstein ideals of codimension 4 with 6 generators.
We prove this result in Theorem \ref{th:generic-gorenstein}.

Let $\FF$ be a split exact complex of format $(1,6,10,6,1)$.
The differentials of $\FF$ can be described by setting $d_4(1)= f_6$, $d_3(f_i)= \hat{e}_i$ for $1\leq i \leq 5$, $d_3(f_6)=0$, 
$d_2(e_i)= f_i^*$, $d_2(\hat{e}_i)= 0$, $d_1(f_i^{*}) = 0$  for $1\leq i \leq 5$, $d_1(f_6^*)=1$. The spinor coordinates of $\FF$ are all zero, except $w^{(0)}_0(u_{\emptyset})= 1$.

First we compute the higher structure map $w^{(1)}_1$ on $\FF$ with generic liftings, introducing new variables $b_H$ for every $H \subseteq \lbrace 1,2,3,4,5 \rbrace$ such that $|H|$ is odd. Using the definition of $w^{(1)}_1$, we immediately get:
$$ w^{(1)}_1(u_{i})= f_i + b_{i}f_6 \quad \mbox{ and } \quad w^{(1)}_1(u_H)= b_Hf_6 \quad \mbox{ if } \quad |H| > 1.    $$
To compute $w^{(1)}_2$ we notice that 
$$ q^{(1)}_2(e_1)= (f_2 + b_{2}f_6) \otimes b_{345}f_6 - (f_3 + b_{3}f_6) \otimes b_{245}f_6+ (f_4 + b_{4}f_6) \otimes b_{235}f_6- (f_5 + b_{5}f_6) \otimes b_{234}f_6, $$
$$ q^{(1)}_2(\hat{e}_1)=  b_{123}f_6 \otimes b_{145}f_6- b_{124}f_6 \otimes b_{135}f_6+  b_{125}f_6 \otimes b_{134}f_6- (f_1 + b_{1}f_6) \otimes b_{12345}f_6. $$
Thus 
$$ w^{(1)}_2(e_1)=  b_{345}f_2 -  b_{245}f_3+  b_{235}f_4- b_{234}f_5 + (b_{2}b_{345}-b_{3}b_{245} + b_{4}b_{235} - b_{5}b_{234}) f_6, $$
$$ w^{(1)}_2(\hat{e}_1)= -b_{12345}f_1 + (P_1-b_{1}b_{12345}) f_6, $$ where the pfaffian $P_1$ is defined as
$P_1:= b_{123} b_{145}- b_{124} b_{135}+  b_{125}b_{134}$.
The other entries are obtained by permutation of the indices.

For the map $w^{(6)}_1$ we recall that $w^{(0)}_0(u_H)=0$ if $H \neq \emptyset$ and the entries of $d_3$ are $\hat{y}_{ii} =1$, $\hat{y}_{ih} = 0$ if $i \neq h$, $y_{ih}=0$.
Hence, 
$$ w^{(6)}_1(u_{\emptyset} \otimes f_h)= b_h \, \mbox{ for } 1\leq h \leq 5, \quad w^{(6)}_1(u_{\emptyset} \otimes f_6)= -1, $$ 
$$ w^{(6)}_1(u_{12} \otimes f_h)= 0 \, \mbox{ for } h =1,2,6, \quad w^{(6)}_1(u_{12} \otimes f_h)= b_{12h} \, \mbox{ for } h =3,4,5, $$ 
$$ w^{(6)}_1(u_{1234} \otimes f_h)= 0 \, \mbox{ for } h \neq 5, \quad w^{(6)}_1(u_{1234} \otimes f_5)= b_{12345}. $$ 
To compute the next map we use the fact that for $1\leq h \leq 5$, $w^{(6)}_1(u_{H} \otimes f_h)$ is nonzero if and only if $h \not \in H$. This gives
$$  q^{(6)}_2(f_1)= b_1b_{12345}f_6 + b_{12345}(f_1 + b_1f_6) + P_1f_6 + w^{(1)}_2(\hat{e}_1) = (b_1b_{12345}+2P_1)f_6,$$
and similarily for other indices $\neq 6$. Thus we obtain
$$ w^{(6)}_2(f_h)= b_1b_{12345}+2P_h \quad \textnormal{for}\quad 1\leq h \leq 5\quad \textnormal{and}\quad w^{(6)}_2(f_6)= -b_{12345}.$$

The ideal generated by the entries of $w^{(6)}_2$ is $(P_1, \ldots, P_5, b_{12345})$. It is a hyperplane section of the ideal of submaximal pfaffians of the skew-symmetric matrix defined by the variables $b_{ijk}$. The matrix of $w^{(1)}_2$ is a presentation matrix of this ideal. On the other hand, the top component of the critical representation $W_0$ contains information about the spinor structure of this related ideal.

The computation of $w^{(0)}_{1,1}$ is straightforward. Using the formulas for $w^{(0)}_0$ we simply get $w^{(0)}_{1,1}(1)= b_{12345}$. Also for $w^{(0)}_{1,2}$ we easily obtain $w^{(0)}_{1,2}(\theta_{ij}) = w^{(0)}_{1,2}(\theta_{\widehat{i}j}) = w^{(0)}_{1,2}(\theta_{i \widehat{j}}) = 0$ and $w^{(0)}_{1,2}(\theta_{\widehat{i} \widehat{j}}) = b_{H}$ where $H = \lbrace 1,2,3,4,5 \rbrace \setminus \lbrace i,j \rbrace$. For $w^{(0)}_{2}$, if $|H| =3, 5$ we get $q^{(0)}_{2}(u_h) = b_{12345} \cdot w^{(1)}_{1}(u_I)$ and $w^{(0)}_{2}(u_H) = b_H b_{12345}$. For $H=\lbrace i \rbrace\subset\{1,\cdots,5\}$, we get
$$q^{(0)}_{2}(u_i) = w^{(1)}_2(\hat{e}_i)+ (f_i + b_i f_6) b_{12345} = P_if_6$$
and consequently $w^{(0)}_{2}(u_i) = P_i$. We will shortly see that the entries of $w^{(0)}_2$ give a choice of spinor coordinates, i.e. a choice of the structure map $w^{(0)}_0$, for the minimal free resolution of the ideal $(P_1, \ldots, P_5, b_{12345})$.

\subsection{The minimal free resolution of a hyperplane section of pfaffians}\label{sec:pfaffians,alpha1}

Let us now analyze higher structure maps in the case of the complex $\FF$ which is the minimal free resolution of a hyperplane section of the ideal generated by submaximal pfaffians of a $5 \times 5$ skew-symmetric matrix. Define the skew-symmetric matrix on variables $x_{ij}=-x_{ji}$ and denote its pfaffians using the notation
$$ P_{\widehat{1}}= x_{23}x_{45} - x_{24} x_{35} + x_{25} x_{34}, $$ with the other pfaffians obtained from this one by permutation of indices. Let $y$ be an element of $R$ regular modulo the ideal $(P_{\widehat{1}}, \ldots, P_{\widehat{5}})$. The complex $\FF$ resolving the Gorenstein ideal $(P_{\widehat{1}}, \ldots, P_{\widehat{5}}, y)$ has format $(1,6,10,6,1)$. We consider bases on $F$ and $G$ such that the differentials of $\FF$ are
$$ \footnotesize d_4 =\bmatrix P_{\widehat{1}}  \\ P_{\widehat{2}} \\ P_{\widehat{3}} \\ P_{\widehat{4}} \\ P_{\widehat{5}} \\ y \endbmatrix , \quad 
d_3=\bmatrix -y & 0 & 0 & 0 & 0 & P_{\widehat{1}} \\  0 & -y & 0 & 0 & 0 & P_{\widehat{2}} \\ 0 & 0 & y & 0 & 0 & -P_{\widehat{3}} \\ 0 & 0 & 0 & -y & 0 & P_{\widehat{4}} \\ 0 & 0 & 0 & 0 & -y & P_{\widehat{5}} \\ 
0 & x_{12} & -x_{13} & x_{14} & -x_{15} & 0 \\
 -x_{12} & 0 & x_{23} & -x_{24} & x_{25} & 0 \\
 -x_{13} & x_{23} & 0  & -x_{34} & x_{35} & 0 \\
x_{14} & -x_{24} & x_{34} & 0 & -x_{45} & 0 \\
 -x_{15} & x_{25} & -x_{35} & x_{45} & 0 & 0 \\
\endbmatrix,\quad
$$
This choice makes the basis of $G$ hyperbolic with respect to the bilinear symmetric form $Q:G\otimes G\rightarrow R$ induced by the multiplication $\FF_2\otimes \FF_2\rightarrow \FF_4$. 
The spinor coordinates of $\FF$ are $$ w^{(0)}_0(u_{\emptyset})=y^2, \quad w^{(0)}_0(u_{ij})= \pm yx_{ij}, \quad w^{(0)}_0(u_{i}^*)= P_{\widehat{i}}.$$

We use the spinor coordinates to compute the other higher structure maps. For $w^{(1)}_1(u_{1})$ we write the spinor cycle
$q^{(1)}_1(u_{1})= u_{12} e_2 + u_{13} e_3+ u_{14} e_4 + u_{15} e_5 + u_{\emptyset} \hat{e}_1. $ Looking at  the columns of $d_3$ and lifting we get $w^{(1)}_1(u_{1}) = yf_1 $ and analogously
$$ w^{(1)}_1(u_{i}) = yf_i\quad\textnormal{for}\quad 1\leq i\leq 5. $$
Similarly we write $q^{(1)}_1(u_{123})= u_{1234} e_4 + u_{1235} e_5+ u_{23} \hat{e}_1 + u_{13} \hat{e}_2 + u_{12} \hat{e}_3.$ Lifting this kind of expressions we get the general formula
$$  w^{(1)}_1(u_{ijk}) = x_{ij} f_k - x_{ik} f_j + x_{jk} f_i\quad\textnormal{for}\quad 1\leq i<j<k\leq 5.$$
Finally, $q^{(1)}_1(u_{12345})= \sum_{i=1}^5 u_i^* \hat{e}_i $ and therefore $$ w^{(1)}_1(u_{12345}) = f_6. $$
To compute the map $w^{(1)}_2$ we apply the general formula to obtain 
$$ q^{(1)}_2(e_1)= yf_2 \otimes (x_{34} f_5 - x_{35} f_4 + x_{45} f_3) - yf_3 \otimes (x_{24} f_5 - x_{25} f_4 + x_{45} f_2)+  $$
$$ +yf_4 \otimes (x_{23} f_5 - x_{25} f_3 + x_{35} f_2)-yf_5 \otimes (x_{23} f_4 - x_{24} f_3 + x_{34} f_2) = 0;     $$
$$ q^{(1)}_2(\hat{e}_1)= yf_1 \otimes f_6 + (x_{12} f_3 - x_{13} f_2 + x_{23} f_1) \otimes (x_{14} f_5 - x_{15} f_4 + x_{45} f_1)+   $$
$$ - (x_{12} f_4 - x_{14} f_2 + x_{24} f_1) \otimes (x_{13} f_5 - x_{15} f_3 + x_{35} f_1) +(x_{12} f_5 - x_{15} f_2 + x_{25} f_1) \otimes (x_{13} f_4 - x_{14} f_3 + x_{34} f_1) = $$ $$ = f_1 \otimes d_4(1_{F_4}).$$
Hence, 
$$ w^{(1)}_2(e_i)=0, \quad w^{(1)}_2(\hat{e}_i)= f_i\quad\textnormal{for}\quad 1\leq i\leq 5.   $$

We move to the maps associated to the critical representation $W_1$. An easy computation shows that the only nonzero entries of $w^{(6)}_1$ are
$$  w^{(6)}_1(u_{\emptyset} \otimes f_6)=y, \quad w^{(6)}_1(u_{ij} \otimes f_6)= y x_{ij}, \quad  w^{(6)}_1(u_{i}^* \otimes f_i)= 1. $$ Using the previous maps and the general formulas we further obtain 
$$ w^{(6)}_2(f_i)= 0\quad \mbox{ for } 1\leq i\leq 5, \quad w^{(6)}_2(f_6)= 1. $$
Observe that the maps $w^{(1)}_2$ and $w^{(6)}_2$ have the same matrices as the third and fourth differentials of the split exact complex that we considered in Section \ref{sec:split,alpha1}.

Finally we look at the maps in $W_0$. For $w^{(0)}_{1,1} $ we have to lift an expression involving all the terms of the form $\pm y x_{12} (x_{34}f_5 - x_{35}f_4 + x_{45}f_3)$ and a term of the form $y^2 f_6+ \sum_{i=1}^5 P_{\hat{i}} yf_i $. The choice of signs is such that the first part cancels out and the second part lifts to $w^{(0)}_{1,1}(1) = y$. We use this map to compute $w_2^{(0)}$ and show that the entries of this map coincide with the spinor coordinates of the split exact complex from the previous section, after applying the isomorphism $u_H \to u^*_H$. Indeed, for $H$ such that $|H|=1,3$, it is easy to check that $q_2^{(0)}(u_H)=0$. For $ H=\lbrace 1,2,3,4,5 \rbrace$, we get $q_2^{(0)}(u_H)= d_4(1_{D_4})$ and therefore $w_2^{(0)}(u_H)=1$.

\section{Higher structure maps from the grading induced by $\alpha_2$}\label{section:hsm2}

In this section we consider the higher structure maps associated to the grading induced by the node $\alpha_2$ of the diagram pictured in Example \ref{ex:2-grading}.

\subsection{First graded components}\label{sec:firstcomponents,alpha2}

To prove our main Theorem \ref{th:generic-gorenstein}, we need another set of higher structure maps. This time they are related to the $\alpha_2$-grading on the Lie algebra $\mathfrak{g}(E_6)$ and their domains decompose into irreducible representations of $\mathfrak{sl}(F)\simeq\mathfrak{g}(A_{6})$, which is the Lie algebra with the Dynkin diagram $E_6\setminus\{\alpha_2\}$. These higher structure maps are still induced by graded components of critical representations. However, unlike for the $\alpha_1$-grading, we do not have a generic object associated with them. Nevertheless, they are crucial in the proof of our main theorem. To prove their existence we define them by lifting cycles.

While for the structure maps associated with the simple root $\alpha_1$ the spinor structure is the main point of focus, here the starting point is the multiplicative structure on the complex $\mathbb{F}$. There there are two relevant critical representations: $W_1:=V(\omega_1,E_n)\otimes\mathbb{C}$ and $W_6:=V(\omega_6,E_6)\otimes G$. To study them we adopt the more classical notation $F_1, F_2, F_3, F_4$ to denote the free modules of the complex $\FF$.
For the format $E_6$ the critical representations decompose into irreducibles as
$$
W_1=[F_1 \oplus\bigwedge^4 F_1 \boxtimes F_4^* \oplus \bigwedge^6F_1 \otimes F_1 \boxtimes S_2 F_4^*] \boxtimes \mathbb{C}.
$$
$$
W_6=[F_1^* \oplus \bigwedge^2 F_1 \oplus \bigwedge^5 F_1 ]\boxtimes F_2^*.
$$

The maps in degree $0$ are the same as for the $\alpha_1$-grading described in the previous section. Indeed, the degree-zero components are just the $(0,0)$-part of the bi-grading induced by the pair of roots $(\alpha_1,\alpha_2)$ on the Lie algebra $\mathfrak{g}(E_6)$. Thus, after tensoring the critical representations with the ring $R$, the maps $w_0^{(6)}:F_1^* \cong F_3 \rightarrow F_2$ and $w^{(1)}_0:F_1 \rightarrow R$ are the differentials.

The maps in degree one can be described for arbitrary ranks of $F_1$ in terms of the multiplicative structure. This time we denote by $f_1, \ldots, f_n$ the basis of $F_1$ and again we use the notation with $e_i, \hat{e}_i$ for the basis of $F_2=G$. 

For the critical representation $W_6$ we have the linear map $w^{(6)}_1:\bigwedge^2 F_1 \rightarrow F_2$ given by the standard skew-symmetric product defined by the comparison of $\FF$ with the Koszul complex on the generators of Im$(d_1)$.

Thus 
$w^{(6)}_1(f_i\wedge f_j)=f_i\cdot f_j\in F_2$. Similarly the structure map $w^{(1)}_1:\bigwedge^4 F_1 \rightarrow R$ is determined by the usual multiplicative structure, induced by the same comparison with the Koszul complex (see Section \ref{section:pre1}). 
In the following, to simplify the notation, we replace the images of maps $_1$ and $w^{(1)}_1$ by products $f_i \cdot f_j$ and $f_{i_1} \cdot f_{i_2} \cdot f_{i_3} \cdot f_{i_4} $, and we also use the same product notation for the map 
$F_2 \otimes F_2 \rightarrow R$.

The structure map in the second graded component of $W_6$ is $w^{(6)}_2:\bigwedge^5 F_1 \rightarrow F_2$. We write down the formula for the choice of basis elements $f_1 \wedge \ldots \wedge f_5 $. For the other elements it can be obtained by permutation with the usual rules for changing the signs.
The element
\begin{equation}
\label{q3,2}
q^{(6)}_2(f_1\wedge\ldots\wedge f_5)=\displaystyle\sum_{j=1}^{5} (-1)^{j+1} w^{(1)}_1( f_1\wedge \ldots \widehat{f_j} \ldots \wedge f_5) \cdot f_j \in F_1
\end{equation}
defines a cycle. The map $w^{(6)}_2$ is a lift of the cycle $q^{(6)}_2$ along the differential $d_2: F_2 \rightarrow F_1$.

The higher structure maps in the critical representation $W_1$ are obtained using more complicated formulas lifting cycles in some complex. For the purpose of this paper we will use only the first of this maps in the case of $E_6$ format. Such a map $w^{(1)}_2: \bigwedge^6 F_1 \otimes F_1 \rightarrow S_2F_4^* \cong R$ is defined by the composition  
\begin{equation}
\label{q1,2}
\bigwedge^6 F_1 \otimes F_1 \rightarrow \bigwedge^5 F_1 \otimes \bigwedge^2 F_1 \rightarrow F_2 \otimes F_2 \rightarrow F_4 \cong R,
\end{equation}
 where the first arrow is the map 
$$f_1 \wedge \ldots \wedge f_6 \otimes f_1 \to \sum_{i=1}^6 (-1)^{i} (f_1 \wedge \ldots \widehat{f_i} \ldots \wedge f_6) \otimes (f_1 \wedge f_i),$$ the second arrow is $w^{(6)}_2 \otimes w^{(6)}_1$, and the last arrow is the multiplication $F_2 \otimes F_2 \rightarrow R$. 

\subsection{Split exact complex
 of format $E_6$}\label{sec:split,alpha2}

Analogously as we did in Section \ref{section:hsm1} for the $\alpha_1$-grading, we compute the higher structure maps in the $\alpha_2$-grading for a split exact complex of the format $E_6$, using defect variables to parametrize generic choices of liftings.

Let $\mathbb{F}$ be the split exact complex of format $E_6$ with $d_4(1_{F_4})= f_6^*$, $d_3( f_6^*)= 0$, $d_1(f_6)= 1$, and for $k=1, \ldots, 5$,
$d_3(f^*_k)= \hat{e}_k$, $d_2(e_k)= f_k$, $d_2(\hat{e}_k)= 0 = d_1(f_k)$. The maps $w^{(6)}_1,\ w^{(1)}_1$ are determined by a multiplicative structure on $\mathbb{F}$. Let us fix again a basis $e_1,\cdots,e_{5}, \hat{e_1},\cdots,\hat{e}_{5}$ of $G$ so that $e_i\cdot e_j=\hat{e_i}\cdot\hat{e_j}=0$ and $e_j\cdot\hat{e_j}=\delta_{i,j}$. Furthermore, we may assume
$$\begin{cases}
f_i\cdot f_j=0\quad\textnormal{for}\quad 1\leq i<j\leq 5\\
f_i\cdot f_6
 =e_i\quad\textnormal{for}\quad 1\leq i\leq 5\\
\end{cases}$$

Note that the multiplication map $w^{(6)}_1:\bigwedge^2 F_1 \rightarrow F_2$ is a lift of the map
$$q^{(6)}_1(f_i\wedge f_j)=\begin{cases}
0\quad\textnormal{for}\quad i,j\neq 6\\
f_i\quad\textnormal{for}\quad j=6
\end{cases}$$
Hence the generic lift of $q^{(6)}_1$ along the differential $d_2$ is
$$w^{(6)}_1(f_i\wedge f_j)=\begin{cases}
0+\sum b^{k}_{i,j}\hat{e_k}\quad\textnormal{for}\quad i,j\neq 6\\
e_i+\sum b^{k}_{i,n}\hat{e_k}\quad\textnormal{for}\quad j=6, \end{cases}$$
where $i,j \in  \lbrace 1,\ldots,6 \rbrace$, $k=1,\ldots, 5$ and $b^k_{i,j}=-b^k_{j,i}$ are defect variables. Thus we initially need to introduce $\binom{6}{2}\cdot 5=75
$ new variables and add them to the base ring $R$.

However, we still want the bilinear form on $F_2$, which is induced by the multiplication $w^{(6)}_1$, to be in the hyperbolic form. Thus for $1\leq i<j\leq 5$ we must impose the conditions
$$0= f_i \cdot f_6 \cdot f_j \cdot f_6 =  w^{(1)}_1(f_i\wedge f_6)\cdot w^{(1)}_1(f_j\wedge f_6)=\left(e_i+\sum b^{k}_{i,6}\hat{e_k}\right)\cdot\left(e_j+\sum b^{k}_{j,6}\hat{e_k}\right)=b^j_{i,6}+b^i_{j,6},$$
$$0= f_
i \cdot f_j \cdot f_i \cdot f_6 = w^{(1)}_1(f_i\wedge f_j)\cdot w^{(1)}_1(f_i\wedge f_6)=\left(\sum b^{k}_{i,j}\hat{e_k}\right)\cdot\left(e_i+\sum b^{k}_{i,6}\hat{e_k}\right)=b^i_{i,j}.$$
It follows that for $1\leq i<j<k\leq 5$ and any permutation $\sigma:\{i,j,k\}\rightarrow\{i,j,k\}$ the defect variables satisfy $b^i_{j,k}=\operatorname{sgn}(\sigma)\cdot b^{\sigma(i)}_{\sigma(j),\sigma(k)}$, and furthermore $b^{i}_{j,6}=-b^{j}_{i,6}$. Thus the defect variables $b^{i}_{j,k}$ with $1\leq i<j<k\leq 6$ generate all the others. Consequently the number of independent defect variables in this first set equals $\binom{n-1}{2}+\binom{n-1}{3}=\binom{6}{3}=20$.

We move on to the map $w^{(1)}_1$. By associativity of the multiplication, this map can be computed in a compatible way with our choice of $w^{(6)}_1$ as
$$w^{(1)}_1(f_i\wedge f_j\wedge f_k\wedge f_l)=w^{(6)}_1(f_i\wedge f_j)\cdot w^{(6)}_1(f_k\wedge f_l)=\begin{cases}
0\quad\textnormal{for}\quad i,j,k,l\neq 6\\
b^{k}_{i,j}\quad\textnormal{for}\quad l=6.
\end{cases}$$

We consider now the top higher structure map in $W_6$: $w^{(6)}_2:\bigwedge^5 F_1 \rightarrow F_2$. 
This map is obtained as a lift of the cycle $q^{(6)}_2(f_{i_1} \wedge \ldots \wedge f_{i_5} )$, defined in equation (\ref{q3,2}). We have 
$$q^{(6)}_2(f_{i_1} \wedge \ldots \wedge f_{i_5} )=\begin{cases}
 b^{i_3}_{i_1,i_2} f_{i_4} - b^{i_4}_{i_1,i_2} f_{i_3} + b^{i_4}_{i_1,i_3} f_{i_2} - b^{i_4}_{i_2,i_3} f_{i_1} \quad\textnormal{for}\quad i_5 = 6\\
0\quad\quad\quad\quad\textnormal{for}\quad i_1,\ldots, i_5 \neq 6
\end{cases}$$
We introduce a second set of defect variables $c^k_{i_1, \ldots, i_5}$ for distinct $i_1, \ldots, i_5\in\{
1,\cdots,6\}$, and $k=1,\ldots,5$. In this way we have
$$w^{(6)}_2(f_{i_1} \wedge \ldots \wedge f_{i_5})=\begin{cases}
\sum c^k_{i_1, \ldots, i_5}\hat{e_k} + b^{i_3}_{i_1,i_2} e_{i_4} - b^{i_4}_{i_1,i_2} e_{i_3} + b^{i_4}_{i_1,i_3} e_{i_2} - b^{i_4}_{i_2,i_3} e_{i_1} \quad\textnormal{for}\quad i_5 = 6\\
\sum c^k_{i_1, \ldots, i_5} \hat{e_k} \quad\quad\quad\quad\textnormal{for}\quad i_1,\ldots, i_5 \neq 6
\end{cases}$$


The defect variables $c^k_{i_1, \ldots, i_5}$ are not independent. In fact, there are relations of dependence between variables $c^k_{i_1, \ldots, i_5}$ and $b^k_{ij}$. 

The map $w^{(6)}_2$ 
satisfies a differential relation \cite[page 10, relation (2)]{palmer} proved by S. Palmer Slattery. 
Let $x_i\in F_1$, $i=1,\cdots,4$, and $v_j\in F_2$, $j=1,2$. Then we have the relation
\begin{dmath*}
\label{diffP1}
v_1\cdot w^{(6)}_2(d_2(v_2)\wedge x_1\wedge \ldots \wedge x_4)+v_2\cdot w^{(6)}_2(d_2(v_1)\wedge x_1\wedge \ldots \wedge x_4)= $$ $$ [x_1\cdot x_2 \cdot x_3 \cdot x_4]\cdot [v_1 \cdot v_2]-\displaystyle\sum_{\sigma\in S_4} \operatorname{sgn}(\sigma)\cdot [v_1 \cdot x_{\sigma(1)} \cdot x_{\sigma(2)}] \cdot [v_2 \cdot x_{\sigma(3)} \cdot x_{\sigma(4)}].
\end{dmath*}

We investigate this relation for the split exact complex case by case, assuming $x_i\in\{f_1,\cdots,f_n\}$ and $v_i\in\{e_1,\cdots,e_{n-1},\hat{e_1},\cdots,\hat{e}_{n-1}\}$. Denote by $RHS$, resp. $LHS$, the right-hand, resp. the left-hand, side of the relation.

The $RHS$ depends only on the variables $b_{ij}^k$.
Since $d_2(\hat{e}_k)=0$, we can assume $v_1 = e_i$, otherwise $LHS = 0$. If $v_2 \in \{\hat{e_1},\cdots,\hat{e}_{n-1}\}$, then also $LHS $ depends only on the variables $b_{ij}^k$. Thus assume $v_2 = e_j$. Set $x_k = f_{i_k}$. If $i=j$ and we choose $x_1, \ldots, x_4 \neq f_i$, we obtain $LHS = 2c_{i, i_1, \ldots, i_4}^{i}$. In particular all the variables $c_{i, i_1, \ldots, i_4}^{i}$ are expressed in terms of the defect variables in the first set. 

If $i = i_1$, $j \neq i_1, \ldots, i_4$, we again get that $LHS = c_{j, i_1, \ldots, i_4}^{i_1}$ is expressed in terms of the $b_{ij}^k$.
But, if $i \neq j$ and both are not in the set $ i_1, \ldots, i_4 $, we get $ LHS = c_{j, i_1, \ldots, i_4}^{i} + c_{i, i_1, \ldots, i_4}^{j} $.

Now, since $n= 6$, we rename the variable $c_{i_1, \ldots, i_5}^{k}$ as $c_{i}^{k}$ where $i$ is the only index different from $i_1, \ldots, i_5$. The above relations shows that $c^{k}_i, c^i_i + c_j^j \in R[b_{ij}^k]$. In particular it is sufficient to introduce only one new defect variable of the form $c^{1}_1$ and
the total number of defect variables is $20+1=21$.


We conclude with the computation of the remaining structure maps in the case of the format $E_6$.

We look back at the relations expressing the variables $c^{k}_i$ in terms of the $ b_{ij}^k $.
First assume that $x_i\neq f_6$ for $i=1,\cdots,4$. In this case both sides of the relation are $0$, unless $v_1=e_i$, $v_2=e_5$, $1\leq i\leq5$. Simple computation shows that if $i<5$, then $RHS=c^i_6$, while if $i=5$, then $RHS=2c^5_6$. To compute the left-hand side, 
 define $$\operatorname{pf}_{1,\hat{6}}:= b^{1}_{2,3}b^{1}_{4,5} - b^{1}_{2,4}b^{1}_{3,5} + b^{1}_{2,5}b^{1}_{3,4},$$ and similarly define $\operatorname{pf}_{i,\hat{j}}$ for $i,j \in \{1,\ldots,6\}$ by the usual permutation rules.
Then we have $LHS=\operatorname{pf}_{i,\hat{6}}$ if $i<5$ and $LHS=2\operatorname{pf}_{i,\hat{6}}$ if $i=5$. 
The relations coming from equation (\ref{diffP1}) show that $c^i_j=\operatorname{pf}_{i,\hat{j}}$ for $i=1,\ldots,6$, $j=1,\ldots,5$, $i\neq j$, and
$c^i_6=\operatorname{pf}_{i,\hat{6}}$ for $i=1,\cdots,5$,
and
Similar computations can be done for the case when one of the $x_i$'s is equal to $f_6$. To get non-zero relation, we must again assume that $v_1,v_2$ are in the isotropic subspace $\langle e_1,\cdots,e_5\rangle$. Considering the case $v_1=v_2$, we obtain $c^i_j=\operatorname{pf}_{i,\hat{j}}$ for $i=1,\cdots,6$, $j=1,\cdots,5$, $i\neq j$. Furthermore, considering the relation for $x_1\wedge\cdots\wedge x_4=f_{i_1}\wedge f_{i_2}\wedge f_{i_3}\wedge f_6$ and $v_1=e_{i}$, $v_1=e_{j}$ such that $\{i_1,i_2,i_3,6\}\cup\{i,j\}=\{1,\cdots,6\}$, we obtain $c^i_i+c^j_j=\operatorname{pf}(i,j)\in R[b^i_{j,k}]$. Here
$$
\operatorname{pf}(1,2)=b^1_{3,4}b^2_{5,6}-b^1_{3,5}b^2_{4,6}+b^1_{3,6}b^2_{4,5}+b^1_{4,5}b^2_{3,6}-b^1_{4,6}b^2_{3,5}+b^1_{5,6}b^2_{3,4}
$$
and $\operatorname{pf}(i,j)$ is defined by the permutation of indices with usual rules for signs.

Using equation (\ref{q1,2}), the last map $w^{(1)}_2: \bigwedge^6 F_1 \otimes F_1 \cong F_1 \rightarrow R$ it is given by
$$w^{(1)}_2(f_i)=\begin{cases}
\operatorname{pf}_{i,\hat{6}}\quad\quad\quad \quad \quad \quad \quad \quad \quad \quad \quad \quad \quad \quad \quad \quad \quad  \textnormal{if}\quad i=1,\cdots,5\\
c^1_1+(\textnormal{expressions in pfaffians involving the index 6}) \quad\quad \quad \quad\textnormal{otherwise}\\
\end{cases}$$

Note that after specializing all the variables $b^{i}_{j,6}\mapsto 0$, $1\leq i<j\leq 5$, the matrices of the top structure maps are
$$ \footnotesize w^{(1)}_2 =\bmatrix \operatorname{pf}_{1}  \\ \operatorname{pf}_{2} \\ \operatorname{pf}_{3} \\ \operatorname{pf}_{4} \\ \operatorname{pf}_{5} \\ c^1_1 \endbmatrix , \quad 
w^{(6)}_2=\bmatrix -c^1_1 & 0 & 0 & 0 & 0 & \operatorname{pf}_{1} \\  0 & -c^1_1 & 0 & 0 & 0 & \operatorname{pf}_{2} \\ 0 & 0 & -c^1_1 & 0 & 0 & \operatorname{pf}_{3} \\ 0 & 0 & 0 & -c^1_1 & 0 & \operatorname{pf}_{4} \\ 0 & 0 & 0 & 0 & -c^1_1 & \operatorname{pf}_{5} \\ 
0 & x_{12} & -x_{13} & x_{14} & -x_{15} & 0 \\
 -x_{12} & 0 & x_{23} & -x_{24} & x_{25} & 0 \\
 x_{13} & -x_{23} & 0  & x_{34} & -x_{35} & 0 \\
-x_{14} & x_{24} & -x_{34} & 0 & x_{45} & 0 \\
 x_{15} & -x_{25} & x_{35} & -x_{45} & 0 & 0 \\
\endbmatrix,
 $$
where $\operatorname{pf}_{i} = \operatorname{pf}_{i, \hat{6}}$.  
Thus these top structure maps are differentials of the minimal free resolution of an hyperplane section of the ideal of pfaffians as we saw for the grading induced by $\alpha_1$.

\subsection{The free resolution of a hyperplane section of pfaffians}\label{sec:pfaffians,alpha2}

To complete the picture we compute higher structure maps for the $\alpha_2$-grading for the hyperplane section of a pfaffians.
As we already performed this type of calculation, we simply write down the results.

The multiplication on $\mathbb{F}$ is easy to compute and the higher structure maps are as follows. In the first degree we have
\begin{dmath*}w^{(6)}_1(f_i^*\wedge f_j^*)=
\begin{cases}
    \hat{e_i}\quad\quad \quad \quad \quad  \quad \quad \quad \quad\quad \quad\textnormal{if}\quad j=6\\
    \pm x_{ab}e_c\pm x_{ac}e_b\pm x_{bc}e_a\quad \textnormal{if}\quad j\neq 6\wedge \{1,2,3,4,5\}\setminus\{i,j\}=\{a,b,c\}\\
\end{cases}
\end{dmath*}
and
\begin{dmath*}
w^{(1)}_1(f_i^*\wedge f^*_j \wedge f^*_k \wedge f_l^*)=\begin{cases}
0\quad\quad\operatorname{if}\quad l\neq 6\\
x_{ab}\quad\operatorname{if}\quad l=6,\{i,j,k,a,b,6\}=\{1,2,3,4,5,6\}.\\
\end{cases}
\end{dmath*}
In the second degree we have
\begin{dmath*}w^{(1)}_2(f_i^*)=
\begin{cases}
    \hat{e_i}\quad\textnormal{if}\quad 1\leq i\leq 5\\
    0\quad\textnormal{if}\quad i=6\\
\end{cases}
\end{dmath*}
and
\begin{dmath*}
w^{(1)}_2(\hat{f_i^*})=\begin{cases}
0\quad\quad\operatorname{if}\quad i\neq 6\\
1\quad\quad\operatorname{if}\quad i=6\\
\end{cases}
\end{dmath*}

Note that the computed top structure maps $w^{(1)}_2$ and $w^{(1)}_2$ are the differentials of a split exact complex. Combining this with results from the previous section, where we have shown that the top structure maps of a split exact complex are the differentials of the free resolution of a hyperplane section of pfaffians, we see that the duality between those two resolutions holds for the grading induced by $\alpha_2$ as well as for those induced by $\alpha_1$.

\section{Generic models for Gorenstein ideals of codimension four with six generators}\label{section:structure_theorems}

Now we are ready to prove the main theorem of the paper: any codimension four Gorenstein ideal $I\subset R$ with six generators is a hyperplane section of a codimension three Gorenstein ideal. If $I$ is generically complete intersection and $I/I^2$ is Cohen-Macaulay, it was proven in \cite{herzog-miller}. In \cite{vv} the second assumption was removed. In this section we show that the conjecture is true in general. Accordingly, we fix $n=6$ throughout this section.

\subsection{Defect variables and the Lie algebra $E_6$}
Before we discuss the proof of the main theorem, we revisit the defect variables introduced in \S\ref{sec:split,alpha1} and \S\ref{sec:split,alpha2}. These variables were used to parametrize the non-uniqueness of higher structure maps. In fact, this non-uniqueness can be recast in terms of subalgebras of $E_6$ as follows.

From Example~\ref{ex:1-grading}, we see that the negative part $\mf{n}_1^-\subset E_6$ in the $\alpha_1$-grading is the half-spinor representation $V(\omega_2,D_{n-1})$. The parametrization of higher structure maps is obtained by acting on the critical representations using the unipotent subgroup $N_1^-$ corresponding to the subalgebra $\mf{n}_1^*$. Indeed, the variables $b_H$ from \S\ref{sec:split,alpha1} should be viewed as elements of $(\mf{n}_1^-)^*$, as they give coordinates on $\mf{n}_1^-$ and consequently on $N_1^-$. For the sake of being as concrete and explicit as possible, we have opted to not use this formulation in the previous sections; the reader may consult \cite{Guerrieri-Ni-Weyman} for an explanation of how these ideas are related in the setting of length three resolutions.

Analogously, from Example~\ref{ex:2-grading}, the negative part $\mf{n}_2^-\subset E_6$ in the $\alpha_2$-grading is $\bigwedge^3 F_1^* \otimes F_4 \oplus \bigwedge^6 F_1^* \otimes S_2 F_4$. The variables $b_{i,j}^k$ and $c_1^1$ from \S\ref{sec:split,alpha2} should be viewed as elements of $(\mf{n}_2^-)^*$, giving coordinates on the first and second summands of $\mf{n}_2^-$ respectively. The parametrization of this other family of higher structure maps is given by the action of the unipotent subgroup $N_2^-$ corresponding to $\mf{n}_2^-$.

This perspective allows us to establish point (1) of \S\ref{subsec:motivation-3}. From \S\ref{sec:split,alpha2}, we see that by setting all defect variables to 0, the matrix $w^{(1)}$ has the form $\begin{bmatrix} 1 & 0 & \cdots & 0\end{bmatrix}$, which evidently satisfies the Pl\"ucker relations. Indeed, it is the Borel-fixed point in $G/P_1^+ \subset \mb{P}(V(\omega_1))$. Since $\operatorname{GL}(F)$, $\operatorname{SO}(G)$, and $N_2^-$ all act on this homogeneous space, it follows that an arbitrary choice of $w^{(1)}$ for an arbitrary split $\mb{F}$ yields a point of $G/P_1^+$.

If $\mb{F}$ is not split, the structure map $w^{(1)}$ still satisfies the Pl\"ucker relations because $\mb{F}$ becomes split after localization. To show that we still have a well-defined point of $\mb{P}(V(\omega_1))$, we need to verify that $w^{(1)}$ is surjective, which we turn to next.

\subsection{Surjectivity of $w^{(1)}$}

Let $V:=V(\omega_1,E_6)$ and consider the bi-grading on $V\otimes V$ induced by $(\alpha_1,\alpha_2)$. With respect to this bi-grading we have the decomposition:
$$
\Big(G\oplus V(\omega_2,D_5)\oplus \mathbb{C}\Big)\boxtimes\Big(F\oplus\bigwedge^4 F\oplus\bigwedge^6F\otimes F\Big)
$$

\begin{lem}\label{lem:square-matrix}
There exists an invertible matrix $M\in V \otimes V$ such that its restriction to $(k,0)$-part, resp. $(0,k)$-part, i.e. to

$$
\Big(G\oplus V(\omega_2,D_5)\oplus \mathbb{C}\Big)\boxtimes F,\quad
\textnormal{resp.}\quad \mathbb{C}\boxtimes\Big(F\oplus\bigwedge^4 F\oplus\bigwedge^6F\otimes F\Big) 
$$
is given by the higher structure maps in the $\alpha_1$-, resp. $\alpha_2$-grading.
\end{lem}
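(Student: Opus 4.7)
The plan is to interpret $M$ as the matrix of a specific $R$-point of the simply-connected Lie group $G$ associated to $E_6$ acting on its $27$-dimensional representation $V = V(\omega_1, E_6)$. The bi-grading on $V \otimes V$ via $(\alpha_1,\alpha_2)$ picks out a distinguished ``row block'' indexed by $\mathbb{C} \subset V$ (the $\alpha_1$-degree $0$ piece) and a distinguished ``column block'' indexed by $F \subset V$ (the $\alpha_2$-degree $0$ piece). By the discussion in \S\ref{subsec:motivation-3} and the reinterpretation of defect variables at the start of this section, the $\alpha_1$- and $\alpha_2$-structure maps computed in \S\ref{sec:split,alpha1} and \S\ref{sec:split,alpha2} should be expressible as the matrix entries of an element $g \in G$ acting on $V$, where $g$ is obtained by combining exponentials of elements of the negative root spaces $\mf{n}_1^-$ and $\mf{n}_2^-$ whose coefficients are the defect variables.

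I would first carry out the verification on a split exact complex $\mathbb{F}^{\mathrm{split}}$ with generic defect variables. Writing $g = g_2^- g_1^-$ with $g_i^- \in N_i^-$ parametrized by the defect variables of \S\ref{sec:split,alpha1} and \S\ref{sec:split,alpha2}, the matrix of $g$ acting on $V$ is automatically invertible. One then matches the Chevalley exponential formulas for the action of $g_1^-$ and $g_2^-$ on each bi-graded piece of $V$ against the explicit formulas for $w^{(1)}_j$ (from the $\alpha_1$-family) and $w^{(1)}_k$ (from the $\alpha_2$-family) derived on $\mathbb{F}^{\mathrm{split}}$. Because $V$ is miniscule for $E_6$, the weight spaces are all one-dimensional and the bi-graded decomposition of $V$ is completely explicit, so this reduces to a finite comparison of formulas.

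For a general Gorenstein ideal $I$ with minimal resolution $\mathbb{F}$, I would pick two distinct non-zerodivisors $x, y \in I$ (they exist since $\operatorname{grade} I = 4$), so that $\mathbb{F}_x$ and $\mathbb{F}_y$ are split exact. Applying the split-case construction produces invertible matrices $M_x, M_y$ over $R_x, R_y$ whose distinguished blocks are the two families of higher structure maps on those localizations. Adjusting the liftings in the structure maps so that the two resulting elements of $G(R_x)$ and $G(R_y)$ agree on $R_{xy}$ gives a single element of $G$ defined away from $V(x) \cap V(y)$. Since this intersection has codimension $\ge 2$ in $\operatorname{Spec} R$ and $G$ is an affine algebraic group, algebraic Hartogs extends this to an $R$-point of $G$, whose matrix on $V$ is the desired $M$; invertibility on $R$ is immediate from invertibility on the covering $\{R_x, R_y\}$.

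The main obstacle is the gluing: one must show that the freedom in choosing lifts of structure maps on $R_x$ and on $R_y$ is large enough to make the two associated elements of $G$ coincide on $R_{xy}$. This ultimately reduces to the statement that the defect variables exhaust all ambiguity in the lifts, so that the difference between the two local choices lies in the image of $N_1^- \cdot N_2^-$ acting on the localized resolution and can be absorbed on one side. Once this compatibility is in hand, the rest of the argument is essentially formal.
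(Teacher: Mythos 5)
Your overall strategy coincides with the paper's: compute the matrix explicitly on a split exact complex, interpret its restrictions as the two families of structure maps, and use algebraic Hartogs plus affineness of $G$ to descend from localizations of $R$ to $R$ itself. However, there is a logical wrinkle in your gluing step that should be straightened out.

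You propose to construct $M_x$ over $R_x$ and $M_y$ over $R_y$ and then ``adjust the liftings in the structure maps'' to force the two $G$-points to agree on $R_{xy}$. But the structure maps in the statement of the lemma are supposed to be fixed once and for all over $R$ (they are choices attached to $\mb{F}$, not to its localizations); their images over $R_x$, $R_y$, $R_{xy}$ are simply the localizations, and these \emph{automatically} agree on $R_{xy}$. If instead you re-choose lifts independently over $R_x$ and $R_y$, you are no longer producing a matrix whose restricted blocks are ``the'' higher structure maps of $\mb{F}$, and the lemma's conclusion would not match. The actual crux, which the paper's proof isolates, is a uniqueness statement in a single localization: for a fixed $h\in I$, the matrix $M$ over $R_h$ built from the (localized) structure maps is \emph{independent of the choice of splitting of $\mb{F}\otimes R_h$ and of the row/column operations used}. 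Once this is known, the matrix is the same over every $R_h$, hence lies in $\bigcap_h R_h = R$; no gluing of two independently made choices is needed. Your remark that ``the defect variables exhaust all ambiguity in the lifts'' is pointing in the right direction, but it needs to be recast as this independence-of-splitting statement rather than as an after-the-fact compatibility to be arranged.

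A smaller inaccuracy: you write $g = g_2^- g_1^-$ with $g_i^-\in N_i^-$, but the operations bringing $M^{\mathrm{split}}$ (which is the $E_6$-invariant pairing, i.e.\ essentially the identity under $V\cong V^*$) to the general case also involve the Levi factors $\operatorname{GL}(F)$ and $\operatorname{SO}(G)$, not only the two unipotent radicals; for a split complex whose differentials are not in the standard normal form, these Levi factors are non-trivial. The identification of $M^{\mathrm{split}}$ with the unique $E_6$-equivariant map $V\otimes V\to\mb{C}$, as the paper does, makes the equivariance under all four groups transparent and avoids having to track an explicit product decomposition of $g$.
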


\begin{proof}
The structure of the proof is identical to the one detailed in \S4.1-4.3 of \cite{Guerrieri-Ni-Weyman}, so we will be brief.

First, computations in sections \ref{sec:split,alpha1} and \ref{sec:split,alpha2} show that if we specialize all defect variables to $0$, higher structure maps of $\mathbb{F}^{split}$ combine into a matrix
\[
M^{split}:=\kbordermatrix{
    & \mathbb{C} & u_1 & \cdots & u_5 & V(\omega_2,D_5)_{>1} & G \\
    f_1 & 0 & 1 & \cdots & 0 & \textbf{0}_{1,11} & \textbf{0}_{1,10}\\
    \cdots & \cdots & \cdots & \cdots & \cdots & \cdots &\cdots \\
    f_5 & 0 & 0 & \cdots & 1 & \textbf{0}_{1,11} & \textbf{0}_{1,10}\\
    f_6 & 1 & 0 & \cdots & 0 & \textbf{0}_{1,11} & \textbf{0}_{1,10}\\
    \bigwedge^4 F & \textbf{0}_{15,1} & \textbf{*}_{1,1} & \cdots & \textbf{*}_{1,1} & \textbf{*}_{15,11} & \textbf{*}_{15,10}\\
    \bigwedge^6 F\otimes F & \textbf{0}_{6,1} & \textbf{*}_{1,1} & \cdots & \textbf{*}_{1,1} & \textbf{*}_{6,11} & \textbf{*}_{6,10}\\
  }
\]
Here $V(\omega_2,D_5)_{>1}\subset V(\omega_2,D_5)$ denotes the subspace spanned by $u_J$ with $\# J>1$, $\textbf{0}_{a,b}$ is an $a\times b$ matrix of zeros and $\textbf{*}_{a,b}$ are yet unspecified $a\times b$ matrices. If we put $\textbf{*}_{1,1}=0$ everywhere and choose other $\textbf{*}_{a,b}$ to get $\operatorname{Id}_{21,21}$ in the bottom right corner, then the matrix $M^{split}$ is invertible, showing that the lemma holds for $\mb{F}^{split}$. Up to scale, $M^{split}$ is the unique  $E_6$-equivariant map $V \otimes V \to \mb{C}$. Alternatively, by identifying $V \cong V^*$ as $E_6$-representations, the matrix $M^{split}$ can be interpreted as the identity matrix.

Now suppose we have an arbitrary $\mb{F}$ resolving $R/I$ where $I$ is a codimension four Gorenstein ideal, with fixed choices of higher structure maps corresponding to both gradings. For any nonzerodivisor $h\in I$, the localization $\mb{F}\otimes R_h$ is split exact. Applying appropriate row and column operations (coming from the groups $\operatorname{SO}(G)$, $\operatorname{GL}(F)$, $N_1^-$, and $N_2^-$), we can transform $M^{split}$ into a matrix $M$ with the desired two restrictions.

A priori, the rest of this matrix has entries in $R_h$ rather than $R$. However, one can show that the output matrix is independent of the choice of operations used (or equivalently, the choice of splitting for $\mb{F}\otimes R_h$). As such, the entries of the matrix must live in every localization $R_h$, and thus in $R$ by algebraic Hartogs because the ideal $I$ has grade $4 \geq 2$. The same argument applies to the determinant of the matrix, so $M$ is invertible over $R$.
\end{proof}

Recall that the theorem of Buchsbaum and Eisenbud states that a codimension three Gorenstein ideal with $n$ generators is generated by the $(n-1)\times(n-1)$ pfaffians of a $n\times n$ skew-symmetric matrix (\cite[Theorem 2.1]{Buchsbaum-Eisenbud_codim-3}). As such, our goal is to prove that the hyperplane section of pfaffians, considered in sections \ref{sec:pfaffians,alpha1} and \ref{sec:pfaffians,alpha2}, is the generic example of a codimension four Gorenstein ideal with six generators.

\begin{thm}\label{th:generic-gorenstein}
Let $R$ be a complete regular local ring in which two is a unit or a graded polynomial ring over quadratically closed field of characteristic $\neq 2$. Let $I\subset R$ be a Gorenstein ideal of codimension four, minimally generated by six elements.
    
Then $I=(J,y)$, where $J$ is the ideal generated by submaximal pfaffians of some skew-symmetric $5\times 5$ matrix and $[y]$ is a regular element of $R/J$.    
\end{thm}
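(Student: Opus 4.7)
The plan is to use Lemma \ref{lem:square-matrix} geometrically, interpreting the invertible matrix $M$ as defining a morphism $f\colon \operatorname{Spec} R\to G/P_1^+$ as outlined in \S\ref{subsec:motivation-3}. The bottom $\alpha_2$-graded component of $w^{(1)}$ is the first differential $d_1$, so $V(I)=f^{-1}(X^w)$ for the opposite Schubert variety $X^w$ attached to $w=s_2 s_4 s_3 s_1$. The six Pl\"ucker coordinates generating the ideal of $X^w$, lying in the bottom $\alpha_2$-component of $V(\omega_1)^*$, pull back under $f$ to give exactly the generators of $I$. Well-definedness of $f$ (i.e.\ the surjectivity of $w^{(1)}$ needed to land inside the projective space) is where Lemma \ref{lem:square-matrix} enters: $w^{(1)}$ occurs as a row of the invertible square matrix $M$.

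Next, I would analyze $X^w$ locally. From Example \ref{ex:Schubert-X^w}, $X^w$ decomposes as a union of $P_2^-$-orbits indexed by the double cosets $W_2\backslash W/W_1$, and the local structure at any point of $X^w$ is controlled by the Kazhdan-Lusztig variety $\mathcal{N}_\sigma^w$ at the torus-fixed point $\sigma v$, for $\sigma$ the minimal length representative of its double coset. For $R$ local, $R/I$ is therefore a specialization of the local ring $\mathcal{O}_{\mathcal{N}_\sigma^w,\sigma v}$ for some such $\sigma$. The key claim, matched by the computations in \S\ref{sec:pfaffians,alpha1} and \S\ref{sec:pfaffians,alpha2}, is that in the $E_6$ case each such Kazhdan-Lusztig variety is, up to a polynomial extension by free variables, cut out by a hyperplane section of the ideal of submaximal pfaffians of a generic $5\times 5$ skew-symmetric matrix.

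Given this identification, the theorem follows quickly. The specialization from the universal hyperplane-section-of-pfaffians ring to $R$ expresses $I=(J,y)$, where $J$ is the image of the generic pfaffian ideal (codimension $3$ Gorenstein with five generators) and $y$ is the image of the hyperplane section element. Since $I$ has codimension $4$, $J$ has codimension at least $3$; on the other hand $J$ is generated by five elements so it cannot have codimension $4$ by Kunz \cite{kunz}. Thus $\operatorname{codim} J=3$ exactly, $J$ is codimension $3$ Gorenstein in the sense of \cite{Buchsbaum-Eisenbud_codim-3}, and $[y]$ must be a non-zero divisor in $R/J$, which is precisely the statement of the theorem.

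The principal technical obstacle is verifying that every Kazhdan-Lusztig variety $\mathcal{N}_\sigma^w\subset G/P_1^+$ for $G$ of type $E_6$ and $w=s_2 s_4 s_3 s_1$ is isomorphic, after adjoining polynomial variables, to a hyperplane section of the generic pfaffian ideal. This amounts to enumerating the finitely many relevant double cosets in $W_2\backslash W/W_1$, computing their Bruhat relations with $w$, and geometrically identifying the resulting local algebras. The explicit computations of \S\ref{sec:pfaffians,alpha1} and \S\ref{sec:pfaffians,alpha2}---in which the top structure maps of the pfaffian hyperplane section reproduce the differentials of the split exact complex, and vice versa---are precisely the local model realizing this identification for the open stratum of $X^w$; the remaining strata should degenerate to special cases of the same picture, and the duality between the two families of higher structure maps is what guarantees that these local pictures patch together consistently.
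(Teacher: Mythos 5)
Your proof follows the same overall geometric strategy as the paper: construct the map $f\colon\operatorname{Spec}R\to G/P_1^+$ from the $\alpha_2$-graded structure map $w^{(1)}$, use Lemma~\ref{lem:square-matrix} to get well-definedness, observe $f^{-1}X^w = \operatorname{Spec}R/I$ for $w = s_2s_4s_3s_1$, and deduce that $R/I$ is a specialization of a local ring of a Kazhdan-Lusztig variety $\mathcal{N}_\sigma^w$ indexed by a double coset in $W_2\backslash W/W_1$. Up to that point you match the paper.

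There is a genuine gap in the step where you identify these local rings. You assert as "the key claim" that every Kazhdan-Lusztig variety $\mathcal{N}_\sigma^w$ in $E_6/P_1^+$ is, after adjoining free variables, a hyperplane section of the generic pfaffian ideal. This is false, and attempting to prove it is the wrong direction. In fact, for $E_6$ the double coset space $W_2\backslash W/W_1$ has exactly three elements, and the corresponding local rings are of three distinct types: the unit ideal, an almost complete intersection of codimension four, and the hyperplane section of $5\times 5$ pfaffians (this enumeration is the content of the citation to~\cite{ftw} in the paper's proof). The theorem therefore requires an additional elimination step: the unit-ideal stratum is excluded because $I$ has codimension four (so $f$ sends the closed point into $X^w$), and the almost-complete-intersection stratum is excluded because $R/I$ is Gorenstein and, by Kunz~\cite{kunz}, a Gorenstein ideal cannot specialize from an almost complete intersection. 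You do invoke Kunz, but only in the final paragraph to bound the codimension of $J$ \emph{after} already assuming the hyperplane-section model applies --- that is a correct observation but it does not address why the ACI stratum is impossible in the first place. Replacing your "key claim" with the three-way enumeration from~\cite{ftw} plus the Gorenstein/Kunz exclusion would close the gap and recover the paper's argument.
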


\begin{proof}

The method of the proof was already hinted at in \S\ref{subsec:motivation-3}. Let $G:=G(E_6)$ be the algebraic group with Lie algebra of type $E_6$ and let $P\subset G$ be the maximal parabolic subgroup corresponding to the vertex $\alpha_1$. The higher structure maps $w^{(1)}=w^{(1)}_0\oplus w^{(1)}_0\oplus w^{(1)}_2$ in the $\alpha_2$-grading live in the $(0,k)$-part of $V\boxtimes V$ and define an element of $R\boxtimes V(\omega_1,E_6)$. There is a standard embedding $G/P\hookrightarrow \mathbb{P}\left(V(\omega_1,E_6)\right)$ whose image is defined by Pl\"ucker relations. These relations are satisfied by the entries of $w^{(1)}$, so $f$ defines a map for $\operatorname{Spec} R$ to the affine cone over $G/P$. By the previous lemma $w^{(1)}$ is non-zero modulo the maximal ideal, hence $f$ gives a well-defined map $f:\operatorname{Spec}R\rightarrow G/P$. The lowest graded component $w^{(1)}_0$ of $w^{(1)}$ is given by the first differential, i.e. by the generators of the ideal $I$. By Example \ref{ex:Schubert-X^w} we conclude that under the map $f$ we have $f^{-1}X^w=\operatorname{Spec}R/I$, where $w=s_2s_4s_3s_1$ and $X^w$ is the appropriate opposite Schubert variety. We conclude that $R/I$ is a specialization of one of the local rings $\mathcal{O}_{\mathcal{N}_\sigma^w, \sigma v}$, where $\sigma \in W$ is a minimal length representative of its double coset in $W_{2} \backslash W / W_{1}$. But for the typ $E_6$ the double coset space has only three elements corresponding to the unit ideal, an almost complete intersection or a hyperplane section as in the statement of the theorem (see \cite{ftw}).
\end{proof}

\section{Conjectural models for Gorenstein ideals with seven and eight generators}\label{section:conjecture-end}

In this section we state a conjectural structure theorem for Gorenstein ideals of codimension four which are minimally generated by seven or eight elements. This is similar to Theorem \ref{th:generic-gorenstein} since it also predicts that every such ideal is a specialization of a Schubert variety. The precise computation of higher structure maps for the formats $E_7$ and $E_8$ should allow a proof of the conjecture completely analogous as our proof of Theorem \ref{th:generic-gorenstein}.

For the format $E_6$ there is only one generic model (only one Herzog class): a hyperplane section of a codimension three ideal. For ideals with seven generators an analysis of possible Schubert varieties shows two possibilities. We call them the \emph{Kustin-Miller model} (see \cite{KM1,KM2}) and the  \emph{generic model} (see \cite{examples}). 
This name 'generic' comes from the fact that, if one allows also non-local ring homomorphisms, it is possible to obtain the Kustin-Miller model as a specialization of the generic one.
Before stating the conjecture, we describe these two generic models for ideals with seven generators and show how to construct them using linkage.

We start with the Kustin-Miller model. Consider the following matrices whose entries are generic variables of degree 1:
$$
X=\begin{bmatrix}
    x_1\\
    x_2\\
    x_3\\
    x_4
\end{bmatrix},\quad
Y=\begin{bmatrix}
    y_{1,1} & y_{1,2} & y_{1,3} & y_{1,4}\\
    y_{2,1} & y_{2,2} & y_{2,3} & y_{2,4}\\
    y_{3,1} & y_{3,2} & y_{3,3} & y_{3,4}
\end{bmatrix},\quad
Z=YX=\begin{bmatrix}
    z_1\\
    z_2\\
    z_3
\end{bmatrix}=
\begin{bmatrix}
    \sum_{i=1}^4y_{1,i}x_i\\
    \sum_{i=1}^4y_{2,i}x_i\\
    \sum_{i=1}^4y_{3,i}x_i\\
\end{bmatrix}
$$
Let $v$ be a variable of degree $2$ and let $Y_i$, $i=1,2,3,4$, be the minor of $Y$ obtained by removing the $i$th column. Then, the Kustin-Miller model is the ideal $I_{KM}$ generated by the following seven elements:
$$
I_{KM}=\langle z_1,\ z_2,\ z_3,\ x_1v+Y_1,\ x_2v-Y_2,\ x_3v+Y_3,\ x_4v-Y_4\rangle
$$

Recall that two ideals $I,J\subset R$ of codimension $c$ are \emph{linked} if there exists a regular sequence $\underline{\alpha}=\alpha_1,\cdots,\alpha_c$ such that $I=(\underline{\alpha}):J$ and $J=(\underline{\alpha}):I$ (see \cite{linkage}). To obtain the ideal $I_{KM}$ by linkage, one proceeds as follows. We start with an almost complete intersection $I_1$ of codimension three. More precisely, let
$$
A=\begin{bmatrix}
    a_1\\
    a_2\\
    a_3
\end{bmatrix},\quad
B=\begin{bmatrix}
    b_{1,1} & b_{1,2} & b_{1,3}\\
    b_{2,1} & b_{2,2} & b_{2,3}\\
    b_{3,1} & b_{3,2} & b_{3,3}
\end{bmatrix},\quad
C=BA=\begin{bmatrix}
    c_1\\
    c_2\\
    c_3
\end{bmatrix}=
\begin{bmatrix}
    \sum_{i=1}^3b_{1,i}a_i\\
    \sum_{i=1}^3b_{2,i}a_i\\
    \sum_{i=1}^3b_{3,i}a_i\\
\end{bmatrix}
$$
Then $I_1=\langle c_1,c_2,c_3,\det B\rangle$ is a codimension three ideal minimally generated by four elements. Let $d$ be a variable and let $I_2=\langle I_1,d\rangle$ be a hyperplane section of $I_1$. It is an almost complete intersection of codimension four. We now introduce new variables $e_1,e_2,e_3,e_4$ of degrees $1,1,1,2$. Then $I_{KM}$ is linked to $I_2$ via the regular sequence
$\underline{\alpha}=c_1+e_1z,c_2+e_2z,c_3+e_3z,\det B+e_4z$, i.e. $I_{KM}=(\underline{\alpha}):I_2$.

The generic model $I_{NW}$ can be obtained from the Kustin-Miller model $I_{KM}$ via a double link. The first link is to an ideal $J=(\underline{\beta}):I_{KM}$, where $\underline{\beta}$ is the regular sequence 
$$
\beta_j:=x_jv+Y_j+\sum_{i=1}^3f_i^jz_i,\quad j=1,\cdots,4
$$
Here $f^j_i$ are new variables of degree $1$. The ideal $J$ is an almost complete intersection generated by $\beta_i$, $i=1,\cdots,4$, and an additional generator $\theta$ of degree $4$. Let $g$ be a new variable of degree 1. Then the generic model is the linked ideal $(\underline{\gamma}):J$, where $\underline{\gamma}$ is the regular sequence $\underline{\gamma}=\beta_1,\beta_2,\beta_3,g\cdot\beta_4+\theta$.

We expect that for Gorenstein ideals of codimension four with seven generators one can construct a family of higher structure maps living inside critical representations of $E_7$. The critical representations in the $\alpha_1$-grading decompose as follows:
$$W_1=F\otimes (G\oplus V(\omega_6, D_6)\oplus G),$$
$$W_7=F^*\otimes (\mathbb{C}\oplus V(\omega_2, D_6)\oplus [{\mathfrak{so}}(12)\oplus\mathbb{C}]\oplus V(\omega_2, D_6)\oplus \mathbb{C}),$$
$$W_0=  V(\omega_6, D_6)\oplus [V(\omega_1, D_6)\oplus V(\omega_3, D_6)]\oplus [V(\omega_1+\omega_6, D_6)\oplus V(\omega_2, D_6)]\oplus $$
$$\oplus  [V(\omega_1, D_6)\oplus V(\omega_3, D_6)]\oplus V(\omega_6, D_6),$$
while for the $\alpha_2$-grading the decompositions of the relevant representations are
$$
W_1=[F_1 \oplus\bigwedge^4 F_1 \boxtimes F_4^* \oplus \bigwedge^6F_1 \otimes F_1 \boxtimes S_2 F_4^* \oplus \bigwedge^7 F_1 \otimes \bigwedge^3 F_1 \boxtimes S_3 F_4^*] \boxtimes \mathbb{C}.
$$
$$
W_7=[F_1^* \oplus \bigwedge^2 F_1 \oplus \bigwedge^5 F_1 \oplus \bigwedge^7 F_1 \otimes F_1 ]\boxtimes F_2^*.
$$
The higher structure maps in the lower components are computed the same way as for the $E_6$ format.

We expect that computation of all higher structure maps will produce an invertible matrix as in Lemma \ref{lem:square-matrix}. Then an argument as in the proof of Theorem \ref{th:generic-gorenstein} will show that every Gorenstein ideal of codimension four minimally generated by seven elements is a specialization of an ideal of an opposite Schubert variety inside an open cell of the homogeneous space $E_7/P_{\alpha_1}$. There are only two such ideals which are Gorenstein of codimension four: one is given by the Kustin-Miller model, and the other by the generic model. Thus we state the following conjecture:

\begin{conjecture}
Let $R$ be a complete regular local ring in which two is a unit or a graded polynomial ring over quadratically closed field of characteristic $\neq 2$. Let $I\subset R$ be a Gorenstein ideal of codimension four, minimally generated by seven elements.
    
Then $I$ is a specialization of the Kustin-Miller model or of the generic model.
\end{conjecture}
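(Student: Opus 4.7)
The plan is to mirror the proof of Theorem \ref{th:generic-gorenstein} step by step, with $E_7$ replacing $E_6$ throughout. First I would extend the constructions of \S\ref{section:hsm1} and \S\ref{section:hsm2} to produce two compatible families of higher structure maps $w^{(i)}_j$ for a resolution $\mathbb{F}$ of format $(1,7,12,7,1)$, one family associated to the $\alpha_1$-grading on $\mathfrak{g}(E_7)$ and one to the $\alpha_2$-grading. As in the $E_6$ case, the bottom components $w^{(1)}_0$, $w^{(7)}_0$, $w^{(0)}_0$ are determined by the differentials and spinor coordinates, the next components by lifting the appropriate cycles (involving the multiplicative and spinor structures), and each higher component by lifting a cycle built from all previously constructed maps. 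Since $E_7$ is still of finite type, there are only finitely many graded components, so this procedure terminates. I would test the construction on the split exact complex with generic defect variables (now parametrizing the action of the unipotent groups $N_1^-$ and $N_2^-$ with Lie algebras $\mathfrak{n}_1^- \cong V(\omega_6,D_6)\oplus \mathbb{C}\subset E_7$ and $\mathfrak{n}_2^-\subset E_7$), and dually on the free resolution of a hyperplane section of the $6\times 6$ pfaffian ideal, checking the same duality between top and bottom structure maps observed in \S\ref{sec:split,alpha1}--\S\ref{sec:pfaffians,alpha2}.

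Next I would prove the $E_7$-analog of Lemma \ref{lem:square-matrix}: there exists an invertible matrix $M \in V(\omega_1,E_7)\otimes V(\omega_1,E_7)\otimes R$ whose restriction to the $(k,0)$- and $(0,k)$-parts of the $(\alpha_1,\alpha_2)$-bigrading is given by the higher structure maps of the two families. The argument is identical in spirit to that of Lemma \ref{lem:square-matrix}: by $E_7$-equivariance, the identity matrix $M^{split}$ on $V(\omega_1,E_7)\cong V(\omega_1,E_7)^*$ has the desired two restrictions for $\mathbb{F}^{split}$ after all defect variables are specialized to $0$; for an arbitrary $\mathbb{F}$, invert a nonzerodivisor $h\in I$ to split the complex, transport $M^{split}$ via the actions of $\operatorname{GL}(F_1)$, $\operatorname{SO}(G)$, $N_1^-$, $N_2^-$ to a matrix $M$ over $R_h$ with the prescribed two restrictions, and then use algebraic Hartogs (grade $\geq 2$) to conclude that the entries and the determinant of $M$ lie in $R$ itself, so that $M$ is invertible over $R$.

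With the invertible matrix in hand, the surjectivity of $w^{(1)}$ follows and the entries of $w^{(1)}$ automatically satisfy the Pl\"ucker relations (since after specialization of defect variables they give the Borel-fixed point of $E_7/P_{\alpha_1}\subset \mathbb{P}(V(\omega_1,E_7))$, and the orbit under $\operatorname{GL}(F_1)\times \operatorname{SO}(G)\times N_2^-$ lands inside $E_7/P_{\alpha_1}$). Therefore $w^{(1)}$ defines a morphism $f\colon \operatorname{Spec} R\to E_7/P_{\alpha_1}$. Taking $w = s_2 s_4 s_3 s_1\in W^{\alpha_1}$ as in Example~\ref{ex:Schubert-X^w}, the fact that the bottom $\alpha_2$-graded component of $w^{(1)}$ recovers the first differential gives $f^{-1}X^w = \operatorname{Spec} R/I$. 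Hence $R/I$ is a specialization of a local ring $\mathcal{O}_{\mathcal{N}_\sigma^w,\sigma v}$ for a minimal length representative $\sigma$ of a double coset in $W_{\alpha_2}\backslash W(E_7)/W_{\alpha_1}$.

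Finally, I would enumerate the double cosets $W_{\alpha_2}\backslash W(E_7)/W_{\alpha_1}$, select those for which the corresponding Kazhdan--Lusztig variety $\mathcal{N}_\sigma^w$ defines a codimension four Gorenstein ideal with exactly seven generators, and match them with the Kustin--Miller and generic models described in \S\ref{section:conjecture-end}. Using linkage, one should verify that the two non-trivial double cosets correspond exactly to $I_{KM}$ and $I_{NW}$, as suggested in \cite{ftw,examples}. The main obstacle is the first step: producing the full list of higher structure maps for $E_7$ and checking that they assemble into an invertible matrix. Unlike the $E_6$ case, $W_1$, $W_7$, and especially $W_0$ have many more graded components (in particular $W_0$ has five nontrivial pieces), and each lifting step requires verifying that a candidate cycle is indeed a cycle; this verification on a split exact complex becomes combinatorially heavy. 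Once the matrix is assembled and proven invertible, the descent to $E_7/P_{\alpha_1}$ and the double coset analysis proceed by direct analogy with the $E_6$ case.
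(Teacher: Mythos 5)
This statement is a \emph{conjecture} in the paper, not a theorem: the paper itself offers no proof, only the same sketch you reproduce (extend the two families of higher structure maps to the $E_7$ format, prove an $E_7$ analog of Lemma~\ref{lem:square-matrix}, descend to a map $\operatorname{Spec} R \to E_7/P_{\alpha_1}$, and enumerate double cosets in $W_2 \backslash W(E_7)/W_1$). Your plan is therefore exactly the approach the authors envision, and your identification of $w = s_2s_4s_3s_1$, the decomposition of $\mathfrak{n}_1^-$, and the role of $N_1^-$, $N_2^-$ in parametrizing lifts are all consistent with the paper's framework.

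However, what you have written is a research program, not a proof, and you say so yourself. The genuine gap is the one you name as the ``main obstacle'': nobody has yet carried out the $E_7$ computation of all higher structure maps (the $W_0$ representation alone has five nontrivial graded pieces), verified that the candidate cycles are cycles at each lifting stage, checked the duality with the hyperplane section of $6\times 6$ pfaffians, and proved the invertibility of the resulting $56 \times 56$ matrix in $V(\omega_1,E_7)\otimes V(\omega_1,E_7)\otimes R$. Without that step, surjectivity of $w^{(1)}$ and hence the descent to $E_7/P_{\alpha_1}$ is not established. This is precisely why the statement is formulated as a conjecture rather than a theorem. There is also a secondary point you gloss over: for the $\alpha_2$-grading the paper explicitly says it ``lacks a general systematic construction'' of higher structure maps, so each component in that family has to be defined ad hoc and its cycle property verified by hand; for $E_6$ this is done in \S\ref{section:hsm2}, but for $E_7$ it remains open and is not merely a combinatorial verification of a known recipe. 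In short: your plan is correct and matches the paper's intent, but the crucial computational lemma is unproven, so the conjecture remains a conjecture.
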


The last $n$ for which the Lie algebra $E_n$ is finite-dimensional is $n=8$. Thus it is natural to conjecture the existence of the higher structure maps, and hence appropriate structure theorems, also for Gorenstein ideals of codimension four with eight generators. More precisely, we conjecture that for every such ideal $I\subset R$ there exists a ring homomorphism $R_{cell}\rightarrow R$, where $R_{cell}$ is the coordinate ring of some Schubert cell inside the homogeneous space $E_8/P_{\alpha_1}$ and the ideal of the Schubert variety $X^{s_2s_4s_3s_1}$ specializes to $I$. We do not go into details of these generic models but for the sake of completeness we include all possible Betti tables.

\begin{small}
\begin{tabular}{c|c c c c c}
	&0&1&2&3&4\\
	\hline
	0&1&&&&\\
	1&&1&&&\\
	2&&7&14&7&\\
	3&&&&1&\\
	4&&&&&1\\
\end{tabular}\quad\quad\quad
\begin{tabular}{c|c c c c c}
	&0&1&2&3&4\\
	\hline
	0&1&&&&\\
	1&&1&&&\\
	2&&3&&&\\
	3&&4&14&4&\\
	4&&&&3&\\
	5&&&&1&\\
	6&&&&&1\\
\end{tabular}\quad\quad\quad
\begin{tabular}{c|c c c c c}
	&0&1&2&3&4\\
	\hline
	0&1&&&&\\
	1&&&&&\\
	2&&2&&&\\
	3&&4&&&\\
	4&&2&14&2&\\
	5&&&&4&\\
	6&&&&2&\\
	7&&&&&\\
	8&&&&&1\\
\end{tabular}\quad\quad\quad

\medskip

\begin{tabular}{c|c c c c c}
	&0&1&2&3&4\\
	\hline
	0&1&&&&\\
	1&&&&&\\
	2&&&&&\\
	3&&4&&&\\
	4&&3&&&\\
	5&&1&14&1&\\
	6&&&&3&\\
	7&&&&4&\\
	8&&&&&\\
	9&&&&&\\
	10&&&&&1\\
\end{tabular}\quad\quad\quad
\begin{tabular}{c|c c c c c}
	&0&1&2&3&4\\
	\hline
	0&1&&&&\\
	1&&&&&\\
	2&&1&&&\\
	3&&1&&&\\
	4&&6&&&\\
	5&&&14&&\\
	6&&&&6&\\
	7&&&&1&\\
	8&&&&1&\\
	9&&&&&\\
	10&&&&&1\\
\end{tabular}\quad\quad\quad
\begin{tabular}{c|c c c c c}
	&0&1&2&3&4\\
	\hline
	0&1&&&&\\
	1&&&&&\\
	2&&&&&\\
	3&&&&&\\
	4&&7&&&\\
	5&&&&&\\
	6&&1&14&1&\\
	7&&&&&\\
	8&&&&7&\\
	9&&&&&\\
	10&&&&&\\
	11&&&&&\\
	12&&&&&1\\
\end{tabular}\quad\quad\quad

\medskip

\begin{tabular}{c|c c c c c}
	&0&1&2&3&4\\
	\hline
	0&1&&&&\\
	1&&&&&\\
	2&&&&&\\
	3&&1&&&\\
	4&&4&&&\\
	5&&3&&&\\
	6&&&14&&\\
	7&&&&3&\\
	8&&&&4&\\
	9&&&&1&\\
	10&&&&&\\
	11&&&&&\\
	12&&&&&1\\
\end{tabular}\quad\quad\quad
\begin{tabular}{c|c c c c c}
	&0&1&2&3&4\\
	\hline
	0&1&&&&\\
	1&&&&&\\
	2&&&&&\\
	3&&&&&\\
	4&&2&&&\\
	5&&5&&&\\
	6&&1&&&\\
	7&&&14&&\\
	8&&&&1&\\
	9&&&&2&\\
	10&&&&5&\\
	11&&&&&\\
	12&&&&&\\
	13&&&&&\\
	14&&&&&1\\
\end{tabular}\quad\quad\quad
\begin{tabular}{c|c c c c c}
	&0&1&2&3&4\\
	\hline
	0&1&&&&\\
	1&&&&&\\
	2&&&&&\\
	3&&&&&\\
	4&&&&&\\
	5&&4&&&\\
	6&&4&1&&\\
	7&&1&&&\\
	8&&&14&&\\
	9&&&&1&\\
	10&&&1&4&\\
	11&&&&4&\\
	12&&&&&\\
	13&&&&&\\
	14&&&&&\\
	15&&&&&\\
	16&&&&&1\\
\end{tabular}\quad\quad\quad

\medskip

\begin{center}
\begin{tabular}{c|c c c c c}
	&0&1&2&3&4\\
	\hline
	0&1&&&&\\
	1&&&&&\\
	2&&&&&\\
	3&&&&&\\
	4&&&&&\\
	5&&&&&\\
	6&&7&&&\\
	7&&1&&&\\
	8&&&&&\\
	9&&&14&&\\
	10&&&&&\\
	11&&&&1&\\
	12&&&&7&\\
	13&&&&&\\
	14&&&&&\\
	15&&&&&\\
	16&&&&&\\
	17&&&&&\\
	18&&&&&1\\
\end{tabular}\quad\quad\quad\quad\quad\quad
\end{center}
\end{small}

\bigskip

\noindent The first of these Betti tables is the one of an hyperplane section of an Gorenstein ideal of codimension 3 and 7 generators. The last Betti table is the one of the generic example constructed in \cite{examples}. As it happens for the $E_7$ models, also in this case all the ideals corresponding to opposite Schubert varieties associated to these Betti tables can be obtained iteratively by linkage, using
a sequence of double links.

\section*{Acknowledgements}

This material is partially based upon work supported by the National Science Foundation under Grant No. DMS-1928930 and by the Alfred P. Sloan Foundation under grant G-2021-16778, while the authors were in residence at the Simons Laufer Mathematical Sciences Institute (formerly MSRI) in Berkeley, California, during the Spring 2024 semester. 
 
T. Chmiel, L. Guerrieri and J. Weyman are supported by the grants MAESTRO NCN-UMO-2019/34/A/ST1/00263 - Research in Commutative Algebra and Representation Theory, NAWA POWROTY- PPN/PPO/2018/1/00013/U/00001 - Applications of Lie algebras to Commutative Algebra, and OPUS grant National Science Centre, Poland grant UMO-2018/29/BST1/01290. 
 
L. Guerrieri is also supported by the Miniatura grant 2023/07/X/ST1/01329 from NCN (Narodowe Centrum Nauki), which funded his visit to SLMath in April 2024.
 
The authors would like to thank Ela Celikbas, Lars Christensen, David Eisenbud, Sara Angela Filippini, Craig Huneke, Witold Kra\'skiewicz, Andrew Kustin, Jai Laxmi, Claudia Polini, Steven Sam, Jacinta Torres, Bernd Ulrich, and Oana Veliche for interesting discussions
 pertaining to this paper and related topics.

\end{document}